\documentclass[12pt]{article}
\usepackage{epsfig,amsmath,amsthm,amssymb,graphics,amsfonts,psfrag}
\usepackage[letterpaper,margin=0.9in]{geometry}

\theoremstyle{plain}
\newtheorem{thm}{Theorem}[section]

\newtheorem{prop}[thm]{Proposition}

\theoremstyle{definition}
\newtheorem{defn}[thm]{Definition}
\newtheorem{ex}[thm]{Example}

\def\R{\mathbb{R}}
\def\C{\mathbb{C}}

\def\P{\mathbb{P}}
\def\E{\mathbb{E}}
\def\Z{\mathbb{Z}}
\def\I{\infty}

\newcommand{\be}{\begin{equation}}
\newcommand{\ee}{\end{equation}}
\newcommand{\bea}{\begin{eqnarray}}
\newcommand{\eea}{\end{eqnarray}}
\newcommand{\beann}{\begin{eqnarray*}}
\newcommand{\eeann}{\end{eqnarray*}}
\newcommand{\benn}{\begin{equation*}}
\newcommand{\eenn}{\end{equation*}}

\def\ra{\rightarrow}
\def\I{\infty}



\begin{document}
 
\title{A Mathematical Framework for Critical Transitions: Bifurcations, Fast-Slow Systems and Stochastic Dynamics}
\author{Christian Kuehn\thanks{Center for Applied Mathematics, Cornell University}}

\maketitle 

\begin{abstract}
Bifurcations can cause dynamical systems with slowly varying parameters to transition to far-away attractors. The terms ``critical transition'' or ``tipping point'' have been used to describe this situation. Critical transitions have been observed in an astonishingly diverse set of applications from ecosystems and climate change to medicine and finance. The main goal of this paper is to give an overview which standard mathematical theories can be applied to critical transitions. We shall focus on early-warning signs that have been suggested to predict critical transitions and point out what mathematical theory can provide in this context. Starting from classical bifurcation theory and incorporating multiple time scale dynamics one can give a detailed analysis of local bifurcations that induce critical transitions. We suggest that the mathematical theory of fast-slow systems provides a natural definition of critical transitions. Since noise often plays a crucial role near critical transitions the next step is to consider stochastic fast-slow systems. The interplay between sample path techniques, partial differential equations and random dynamical systems is highlighted. Each viewpoint provides potential early-warning signs for critical transitions. Since increasing variance has been suggested as an early-warning sign we examine it in the context of normal forms analytically, numerically and geometrically; we also consider autocorrelation numerically. Hence we demonstrate the applicability of early-warning signs for generic models. We end with suggestions for future directions of the theory.  
\end{abstract}

{\bf Keywords:} Critical transition, tipping point, multiple time scales, bifurcation delay, stochastic dynamics, Fokker-Planck equation, noise-induced transitions.

\section{Introduction}
\label{sec:intro}

In this paper ``critical transitions'' or ``tipping points'' are viewed from the perspective of dynamical systems. Our aim is to point out that various observations, assumptions and ideas developed in diverse scientific disciplines can be expressed naturally using standard mathematical theory. In particular, we hope that this paper can be viewed as a mathematical complement to the excellent review by Scheffer et al \cite{Schefferetal}. A non-mathematical working definition of a critical transition is an abrupt change in a dynamical system. To illustrate the concept we list four examples

\begin{itemize}
 \item In ecosystems rapid changes to desertification or extinctions of species can occur \cite{Schefferetal1,SchefferCarpenter}.
 \item Medical conditions can quickly change from regular to irregular behavior; examples are asthma attacks \cite{Venegasetal} or epileptic seizures \cite{McSharrySmithTarassenko}. 
 \item Financial markets can transition from a balanced market to a financial crisis \cite{MayLevinSugihara}. 
 \item Changes in the climate and its constituent subsystems can occur abruptly \cite{Bakkeetal,Lentonetal,Alleyetal}. 
\end{itemize}
 
It is clear that we would like to understand and predict these phenomena. At first glance it might be surprising that all four examples have anything in common as they arise in completely different contexts and situations. Nevertheless, it has become apparent that critical transitions share several attributes \cite{Scheffer,Schefferetal}:

\begin{itemize}
 \item[(1)] An abrupt qualitative change in the dynamical system occurs.
 \item[(2)] The change occurs rapidly in comparison to the regular dynamics.
 \item[(3)] The system crosses a special threshold near a transition. 
 \item[(4)] The new state of the system is far away from its previous state.
\end{itemize} 

Furthermore, significant progress has been made in predicting a critical transition before it occurs. The goal is to infer from previous data when a catastrophic shift in the dynamics is going to occur. Ideally we would like to have a comprehensive list of early-warning signs. A variety of system-specific criteria could be introduced; but we are more interested in generic indicators that are expected to be applicable to large classes of transitions. The following assumption will be of major importance \cite{Schefferetal}

\begin{itemize}
 \item[(5)] There is small noise in the system i.e. the data has a major deterministic component with small ``random fluctuations''.
\end{itemize}

There are several characteristics that have been observed in systems before critical transitions. We shall only list a few of them here: 

\begin{itemize}
 \item[(6)] The system recovers slowly from perturbations (``slowing down'').
 \item[(7)] The variance of the system increases as the transition is approached.
 \item[(8)] The noisy fluctuations become more asymmetric. 
 \item[(9)] The autocorrelation increases before a transition.
\end{itemize}  

Figure \ref{fig:fig12} shows time series with critical transitions; the times series have been generated by simulating two generic models discussed in Sections \ref{sec:moments}-\ref{sec:autocorr} for the fast-slow fold and transcritical bifurcations. Many natural questions arise regarding analysis and comparison of these two time series. The observations (1)-(9) are extremely important for this purpose. However, it is desirable to embed these observations into a mathematically precise description of the system dynamics and to identify them in generic models. Relations to bifurcation theory and some indicators have been partially analyzed using statistical techniques such as autoregressive models \cite{Schefferetal}.\\ 

\begin{figure}[htbp]
\psfrag{x}{$x$}
\psfrag{t}{$t$}
\psfrag{a}{(a)}
\psfrag{b}{(b)}
	\centering
		\includegraphics[width=0.95\textwidth]{./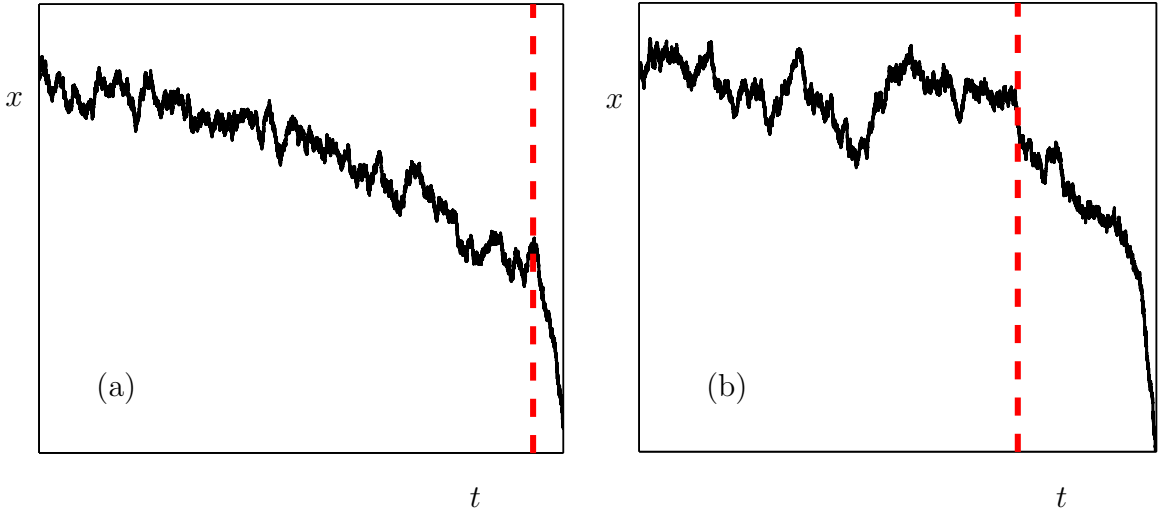}
	\caption{\label{fig:fig12} Two time series with critical transitions. The red dashed vertical lines have been added to indicate where a clear visual change in the time series behavior appears; both series have been generated using fast-slow stochastic dynamical systems: (a) fold and (b) transcritical. The time series for the generic models we propose resemble time series from experiments.}	
\end{figure} 

The major goal of the current paper is to argue that the observations (1)-(9), that are usually made in applications, should be understood from a mathematical viewpoint by using deterministic and stochastic multiscale dynamical systems. We shall try to provide an overview which mathematical concepts and tools can be used to develop a theory of critical transitions. The natural starting point is bifurcation theory \cite{GH,Kuznetsov} and in this respect our approach is closest to recent work by Sieber and Thompson \cite{ThompsonSieber,ThompsonSieber1} that realizes the need for a detailed bifurcation-theoretic analysis of tipping points. Given a dynamical system, such as a differential equation or iterated map, bifurcation theory can be used to classify qualitative transitions under the variation of parameters. It has been successfully applied in fields ranging from physics, engineering and chemistry to modern developments in neuroscience and mathematical biology \cite{Strogatz}. Here we shall focus on differential equations to simplify the discussion but remark that discrete time systems can also be studied from this perspective. As a second step we introduce stochasticity into the dynamics. Our approach is closest to the work by Berglund and Gentz \cite{BerglundGentz4,BerglundGentz5}; they demonstrated the applicability of stochastic multiscale differential equations in a variety of contexts such as climate modeling and neuroscience. Here we point out what their approach implies for critical transitions. We also use numerical simulation of ``normal-form''-type models and compare our results to theoretic results obtained from Fokker-Planck equations. Our numerical approach can also provide benchmark data for time series analysis methods \cite{LivinaLenton,KleinenHeldPetschel-Held}.\\

The structure of the paper is as follows. In Section \ref{sec:det1} we explain why the differential equations describing critical transitions should have multiple time scales. The focus will be on two scales described by a fast-slow system. We suggest that fast-slow systems theory provides a natural definition for a critical transition and check that certain bifurcations satisfy this definition. Furthermore we review the basic calculation for ``slowing down'' and point that slowing down differs for different types of critical transitions. In Section \ref{sec:det2} we review the concept of normal hyperbolicity that separates regular fast-slow system dynamics from dynamic bifurcations; the application in the context of critical transitions is indicated. In Section \ref{sec:stochastic_basic} we recall some basic tools and viewpoints in stochastic analysis. Stochastic fast-slow systems are introduced and some recent results are stated. The case of a fast-slow stochastic system away from a critical transition is defined and its properties are investigated; this provides a comparative method to detect transitions and also to estimate system parameters. In Section \ref{sec:indicators} we review recent progress in stochastic bifurcation theory and show how this theory should apply to and interact with a critical transition involving noise. The concept of stochastic bifurcation is much less developed. However, we introduce an example that shows that so-called P-bifurcations can be early-warning signs. In Section \ref{sec:noiseinduced} noise-induced phenomena are discussed that have recently been discovered in many mathematical models. The prediction of critical transitions is more complicated in this context and a numerical example illustrates this point. In Section \ref{sec:moments} we consider critical transitions under the simplest mathematical assumptions. We calculate the variance of distributions of trajectories near a stochastic critical transition point using a Fokker-Planck approach. In Section \ref{sec:simulation} we use numerical simulations to expand on our modeling approach and we discuss the variance as an indicator if only a single sample path is available. Section \ref{sec:autocorr} extends the numerical simulation approach to autocorrelation. Section \ref{sec:discussion} concludes the paper with a discussion of the current state of the theory and a discussion of topics we omitted. We also sketch some directions for future work.

\section{Fast-Slow Systems I: Critical Transitions}
\label{sec:det1} 

We start with the deterministic theory. Our first goal is to make the term ``critical transition'' mathematically more precise. Consider the parametrized family of ordinary differential equation:
\be
\label{eq:det_gen}
\frac{dx}{dt}=x'=f(x;y)
\ee
where $x\in \R^m$ are phase space variables and $y\in \R^m$ represent parameters. A general statement that can often be found in the description of critical transitions in applications is that ``a parameter evolves slowly until the tipping point is reached''. Therefore it is a natural approach to include the parameters into the original differential equation. The parametrized family \eqref{eq:det_gen} can be written as
\be
\label{eq:fss}
\begin{array}{lcl}
x' & = & f(x,y),\\
y' & = & 0.
\end{array}
\ee 
Using \eqref{eq:fss} it is easy to incorporate slowly varying parameters by adding a slow evolution to $y$
\be
\label{eq:fs_gen}
\begin{array}{lcr}
x' & = & f(x,y),\\
y' & = & \epsilon g(x,y),
\end{array}
\ee 
where $0<\epsilon\ll 1$ is a small parameter and $g$ is assumed to be sufficiently smooth. In many cases it suffices to assume that the parameter dynamics is de-coupled from phase space dynamics and one assumes $g\equiv 1$. The ODEs \eqref{eq:fs_gen} form a fast-slow system where the variables $x\in \R^m$ are the fast variables and $y\in \R^m$ are the slow variables. The parameter $\epsilon$ describes the time scale separation. We point out that the ``inclusion of dynamic slow parameters'' is entirely standard and well-known in the theory of multiple time scale dynamics.\\ 

\textit{Remark:} A new introductory book to fast-slow systems is currently being written \cite{KuehnBook}. The book is going to include the deterministic theory as well as numerical and stochastic components that are relevant in Sections \ref{sec:stochastic_basic}-\ref{sec:simulation}. Classical references for deterministic fast-slow systems are \cite{Jones,MisRoz,Grasman}. In the current paper we restrict ourselves to review the necessary definitions and concepts that can directly be applied to critical transitions.\\

Equation \eqref{eq:fs_gen} can be re-written by changing from the fast time scale $t$ to the slow time scale $\tau=\epsilon t$
\be
\label{eq:fs_gen1}
\begin{array}{rcrcl}
\epsilon \frac{dx}{d\tau} & = & \epsilon \dot{x} & = & f(x,y),\\
\frac{dy}{d\tau} & = & \dot{y} & = & g(x,y).
\end{array}
\ee 
The first step to analyze a fast-slow system is to consider the singular limit $\epsilon\ra 0$. In the formulation \eqref{eq:fs_gen} this yields the parametrized family \eqref{eq:fss} which is also known as the fast subsystem or layer equations. Considering the singular limit in \eqref{eq:fs_gen1} gives the slow subsystem or reduced system
\be
\label{eq:sss}
\begin{array}{lcl}
0 & = & f(x,y),\\
\dot{y} & = & g(x,y).
\end{array}
\ee
The associated subsystem flows are naturally called the fast flow and the slow flow. Equation \eqref{eq:sss} is a differential-algebraic equation so that the slow flow is constrained to
\benn
C=\{(x,y)\in\R^{m+n}:f(x,y)=0\}.
\eenn 
The set $C$ is called the critical set or the critical manifold if $C$ is manifold. The points in $C$ are equilibria for the fast subsystem \eqref{eq:fss}. $C$ is normally hyperbolic at $p\in\R^{m+n}$ if the matrix $(D_xf)(p)$ is hyperbolic i.e. all its eigenvalues have non-zero real parts. If all eigenvalues have negative/positive real parts then $C$ is attracting/repelling at $p$; if $C$ is normally hyperbolic and neither attracting nor repelling we say it is of saddle-type. For a normally hyperbolic critical manifold the implicit function theorem gives
\benn
C=\{(x,y)\in\R^{m+n}:h_0(y)=x\}
\eenn
where $h_0:\R^n\ra \R^m$ satisfies $f(h_0(y),y)=0$. Then the slow flow can be written as
\benn
\dot{y}=g(h_0(y),y).
\eenn

Fenichel's Theorem \cite{Fenichel4,WigginsIM,Tikhonov} provides a complete description of the dynamics for normally hyperbolic invariant manifolds.

\begin{thm}[Fenichel's Theorem]
\label{thm:fenichel1}
Suppose $S=S_0$ is a compact normally hyperbolic submanifold (possibly with boundary) of the critical manifold $C$. Then for $\epsilon>0$ sufficiently small there exists a locally invariant manifold $S_\epsilon$ diffeomorphic to $S_0$. $S_\epsilon$ has a Hausdorff distance of $O(\epsilon)$ from $S_0$ and the flow on $S_\epsilon$ converges to the slow flow as $\epsilon \to 0$.
\end{thm}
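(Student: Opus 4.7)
The plan is to realize $S_\epsilon$ as the graph of a function $h_\epsilon:V\to\R^m$ defined on a neighborhood $V\subset\R^n$ of $\pi(S_0)$, where $\pi$ denotes projection onto the slow variables, and to produce $h_\epsilon$ via a contraction-mapping argument that perturbs $h_0$. First I would reduce to a convenient global setting: since $S_0$ is compact, multiply $f$ and $g$ by a smooth cutoff that is $1$ on a neighborhood of $S_0$ and has compact support, so the vector fields become globally bounded together with all their derivatives while $S_0$ and its vicinity are untouched. Working in local charts on $S_0$ and patching via a finite cover, normal hyperbolicity gives that $D_xf(h_0(y),y)$ is invertible on the compact set $\pi(S_0)$, so the implicit function theorem already supplies $h_0$ with $f(h_0(y),y)=0$.

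The key observation is that the graph $\{x=h(y)\}$ is invariant under \eqref{eq:fs_gen} if and only if
\be
\label{eq:invcond}
f(h(y),y) \;=\; \epsilon\,(Dh)(y)\,g(h(y),y).
\ee
Writing $h=h_0+\eta$ and setting $A(y):=D_xf(h_0(y),y)^{-1}$, I would convert \eqref{eq:invcond} into a fixed-point equation $\eta=\mathcal{T}_\epsilon(\eta)$ by expanding $f(h_0+\eta,y)=D_xf(h_0,y)\eta+R(\eta,y)$ and applying $A(y)$, yielding
\benn
\eta \;=\; \epsilon\,A(y)\bigl[(Dh_0)(y)+ (D\eta)(y)\bigr]g(h_0+\eta,y) \;-\; A(y)\,R(\eta,y).
\eenn
Taking a closed ball of radius $O(\epsilon)$ in the Banach space $C^1(V,\R^m)$ around $0$, I would verify that $\mathcal{T}_\epsilon$ maps the ball into itself and contracts there for $\epsilon$ sufficiently small. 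The unique fixed point $\eta_\epsilon=h_\epsilon-h_0$ automatically satisfies $\|h_\epsilon-h_0\|_{C^0}=O(\epsilon)$, from which the Hausdorff bound follows. Higher regularity $C^r$ is then bootstrapped by iterating the same scheme in $C^r$ once the $C^1$ fixed point is in hand.

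The step I expect to be the main obstacle is the contraction estimate itself: equation \eqref{eq:invcond} involves $Dh$, so the argument cannot close in $C^0$ alone, yet differentiating \eqref{eq:invcond} threatens to introduce a $D^2h$ term that could destroy contractivity in $C^1$. The saving feature is that every such derivative term enters with a factor $\epsilon$, so the Lipschitz constant of $\mathcal{T}_\epsilon$ on the $C^1$ ball is $O(\epsilon)$ and hence less than $1$ for small $\epsilon$. An alternative route that sidesteps this functional-equation viewpoint is the Hadamard graph-transform or Lyapunov--Perron integral-equation construction on the center-stable and center-unstable bundles supplied by normal hyperbolicity, intersecting them to define $S_\epsilon$; this is the standard path when one needs the accompanying stable/unstable foliations and sharp exponential estimates, and it handles saddle-type $S_0$ uniformly.

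Finally, the convergence of the flow on $S_\epsilon$ to the slow flow follows quickly. Parametrizing $S_\epsilon$ by $y\in V$, the reduced dynamics in slow time read $\dot y=g(h_\epsilon(y),y)$. Because $h_\epsilon\to h_0$ in $C^1$ on the compact set $\pi(S_0)$ and $g$ is smooth, $g(h_\epsilon(y),y)\to g(h_0(y),y)$ uniformly, and standard continuous dependence of ODE solutions on the vector field gives convergence of the flow on $S_\epsilon$ to the slow flow on $S_0$ uniformly on compact $\tau$-intervals. The map $y\mapsto(h_\epsilon(y),y)$ furnishes the diffeomorphism $S_0\to S_\epsilon$ since it is $C^1$-close to $y\mapsto(h_0(y),y)$, completing the statement of Theorem~\ref{thm:fenichel1}.
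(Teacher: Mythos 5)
First, a point of comparison: the paper does not prove Theorem \ref{thm:fenichel1} at all --- it is quoted as a classical result with citations to Fenichel, Wiggins and Tikhonov --- so there is no in-paper argument to measure you against, and your proposal has to stand on its own.

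On its own terms, your primary construction has a genuine gap at exactly the step you flag, and the fix you offer does not repair it. The invariance condition $f(h(y),y)=\epsilon\,(Dh)(y)\,g(h(y),y)$ is a first-order PDE for $h$, and the operator $\mathcal{T}_\epsilon$ you derive from it loses a derivative: if $\eta\in C^1$ then $\mathcal{T}_\epsilon(\eta)$ contains $D\eta$ and is in general only $C^0$, so to land back in $C^1$ you would need $\eta\in C^2$, and so on up the scale. The obstruction is not that the Lipschitz constant might be too large --- your observation that the dangerous terms carry a factor $\epsilon$ addresses only contractivity --- it is that $\mathcal{T}_\epsilon$ is not a self-map of any fixed $C^k$ ball, so the contraction mapping theorem simply does not apply. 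This loss-of-derivatives problem is precisely why the standard proofs are dynamical rather than attempts to solve the invariance PDE directly: one uses the Hadamard graph transform (iterating the time-$T$ map of the cut-off system on a space of Lipschitz graphs, where normal hyperbolicity makes the transform a contraction in the Lipschitz topology, with smoothness recovered afterwards by a fiber-contraction argument) or the Lyapunov--Perron integral equation along trajectories. Your direct scheme does legitimately generate the formal expansion $h_\epsilon=h_0+\epsilon h_1+\cdots$ order by order, since each coefficient needs only finitely many derivatives of the previous ones, but it does not by itself produce an actual invariant manifold. The surrounding ingredients --- the compactly supported cutoff, the $O(\epsilon)$ Hausdorff estimate from $\|h_\epsilon-h_0\|_{C^0}=O(\epsilon)$, the convergence of $\dot{y}=g(h_\epsilon(y),y)$ to the slow flow by continuous dependence, and the graph map as the diffeomorphism $S_0\to S_\epsilon$ --- are all correct and would go through once existence of $h_\epsilon$ is secured; but that must come from one of the two methods you relegate to an ``alternative route,'' which should in fact be the main argument.
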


$S_\epsilon$ is called a slow manifold. Different slow manifolds $S_\epsilon$ lie at a distance $O(e^{-K/\epsilon})$ from each other and so we will often simply refer to ``the'' slow manifold as the choice of representative is irrelevant for many asymptotic results. A normally hyperbolic critical manifold $C_0$ has associated local stable and unstable manifolds
\beann
W^s(C_0) = \bigcup_{p \in C_0} W^s(p), \qquad \text{and} \qquad W^u(C_0) = \bigcup_{p \in C_0} W^u(p),
\eeann
where $W^s(p)$ and $W^u(p)$ are the local stable and unstable manifolds of $p$ as a hyperbolic equilibrium of the fast subsystem. These manifolds also persist for $\epsilon>0$ sufficiently small. In addition to Fenichel's Theorem there are coordinate changes that simplify a fast-slow system considerably near a critical manifold \cite{Fenichel4,JonesKaperKopell} if the slow flow has no bounded invariant sets.

\begin{thm}[Fenichel Normal Form]
\label{thm:FNform}
Suppose $S_0$ is a compact normally hyperbolic submanifold of $C$ with $m_u$ unstable and $m_s$ stable fast directions and that the slow flow is rectifiable on $S_0$. Then there exists a smooth invertible coordinate change $(x,y)\mapsto (a,b,v)\in\R^{m_u}\times \R^{m_s}\times \R^n$ so that a fast-slow system \eqref{eq:fs_gen} can be written as:
\bea
\label{eq:FNform1}
a'&=& \Lambda(a,b,v,\epsilon)a\nonumber,\\
b'&=& \Gamma(a,b,v,\epsilon)b,\\
v'&=& \epsilon(e_1+H(a,b,v,\epsilon)ab)\nonumber,
\eea
where $\Lambda$, $\Gamma$ are matrix-valued functions. $\Lambda$ has $m_u$ positive and $\Gamma$ has $m_u$ negative eigenvalues, $e_1=(1,0,\ldots,0)^T\in \R^n$ is a unit vector and $H$ is bilinear in $a,b$.
\end{thm}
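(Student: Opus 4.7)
The plan is to use Fenichel's theorem together with the invariant foliation theorems to build adapted coordinates and then read the normal form straight off. First I would invoke Theorem \ref{thm:fenichel1} and its standard extensions to obtain, for $\epsilon>0$ sufficiently small, the slow manifold $S_\epsilon$ diffeomorphic to $S_0$, the local unstable and stable manifolds $W^u(S_\epsilon)$ and $W^s(S_\epsilon)$ of dimensions $n+m_u$ and $n+m_s$, and invariant foliations $\mathcal{F}^u,\mathcal{F}^s$ of these manifolds by fast unstable and stable fibers based at points of $S_\epsilon$. The critical property is the intertwining $\pi^s\circ\phi_t=\phi_t|_{S_\epsilon}\circ\pi^s$ on $W^s(S_\epsilon)$, where $\pi^s:W^s(S_\epsilon)\to S_\epsilon$ is projection along stable fibers, and analogously for $\pi^u$.

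Next I would define coordinates $(a,b,v)\in\R^{m_u}\times\R^{m_s}\times\R^n$ in a tubular neighborhood of $S_\epsilon$ by writing $p=\phi^u_a\circ\phi^s_b(v)$, where $v\in S_\epsilon$ is the base point and $\phi^u_a$, $\phi^s_b$ parameterize the unstable and stable fibers. The $v$-coordinates on $S_\epsilon$ are chosen so that the flow on $S_\epsilon$ is rectified to $\dot v=e_1$; this is possible by the rectifiability hypothesis together with the $O(\epsilon)$ closeness of the flow on $S_\epsilon$ to the slow flow. In these coordinates $W^s(S_\epsilon)=\{a=0\}$ and $W^u(S_\epsilon)=\{b=0\}$. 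Invariance of $\{a=0\}$ forces $a'$ to vanish identically whenever $a=0$, so Hadamard's lemma factors $a'=\Lambda(a,b,v,\epsilon)a$, and symmetrically $b'=\Gamma(a,b,v,\epsilon)b$. Evaluating $\Lambda(0,0,v,0)$ and $\Gamma(0,0,v,0)$ recovers the unstable and stable blocks of $D_xf$ on $C$, which yields the claimed eigenvalue signatures by normal hyperbolicity.

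For the $v$-equation, two observations combine. On $\{a=0\}$ the fiber-projection intertwining implies that the base point of $p$ evolves according to the rectified flow on $S_\epsilon$, so $v'=\epsilon e_1$ there; the same holds on $\{b=0\}$. Second, at $\epsilon=0$ the fast subsystem fixes base points of all fibers, hence $v'|_{\epsilon=0}\equiv0$ and $v'=\epsilon\tilde H(a,b,v,\epsilon)$ for some smooth $\tilde H$. Combining these, $\tilde H-e_1$ vanishes on $\{a=0\}\cup\{b=0\}$, and two successive applications of Hadamard's lemma — first factoring out the components of $a$, then those of $b$ — produce a trilinear map $H$ with $\tilde H-e_1=H(a,b,v,\epsilon)\,ab$, yielding $v'=\epsilon(e_1+H(a,b,v,\epsilon)ab)$ as required.

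The hard part is the construction of the invariant foliations with sufficient regularity: Fenichel's fiber theorems give $C^{k-1}$ smoothness for a $C^k$ vector field and require a delicate graph transform or Hadamard--Perron argument in the normally hyperbolic setting. A related technical point is the compatibility of both foliations in a single chart over the whole compact $S_0$, so that the product-type map $(a,b,v)\mapsto\phi^u_a\phi^s_b(v)$ is a diffeomorphism on a uniform tubular neighborhood; this follows from transversality of $W^u(S_\epsilon)$ and $W^s(S_\epsilon)$ along $S_\epsilon$ but demands uniform estimates. Finally, globally rectifying the flow on $S_\epsilon$ presupposes the absence of recurrence, which is precisely what the rectifiability hypothesis encodes; without it one only obtains a local version of the normal form.
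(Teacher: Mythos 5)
The paper does not prove this theorem: it is quoted from \cite{Fenichel4,JonesKaperKopell}, and the only indication of an argument is the remark that $S_0$ perturbs to $S_\epsilon$ by Theorem \ref{thm:fenichel1} and that $S_\epsilon$ is then ``straightened'' together with its stable and unstable manifolds into coordinate planes \cite{Jones}. Your sketch is precisely that standard route --- fiber coordinates from the invariant foliations, rectification of the slow flow, and Hadamard factorization to produce $\Lambda a$, $\Gamma b$ and the $\epsilon(e_1+Hab)$ term --- and it correctly locates the genuinely hard analytic content in Fenichel's fiber theorems, so it is a faithful outline of the proof the cited literature supplies.
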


The manifold $S_0$ perturbs to a slow manifold $S_\epsilon$ by Fenichel's Theorem. Then this slow manifold is ``straightened'' together with its stable and unstable manifolds that become coordinate planes \cite{Jones}. Therefore we can basically assume that the fast subsystem near a normally hyperbolic critical manifold is linear with eigendirections aligning with the coordinates. This will provide the basis for our discussion of dynamical behavior far away from critical transition points. The next classical example illustrates the definitions and shows how normal hyperbolicity can fail.

\begin{ex}
\label{ex:fold}
Consider a planar fast-slow system modeling a fold bifurcation with slow parameter drift \cite{KruSzm1}:
\be
\label{eq:fp_fs}
\begin{array}{rcl}
\epsilon \dot{x} & = & -y-x^2,\\
\dot{y} & = & 1.
\end{array}
\ee

\begin{figure}[htbp]
\psfrag{x}{$x$}
\psfrag{gamma}{$\gamma_0$}
\psfrag{gammaeps}{$\gamma_\epsilon$}
\psfrag{y}{$y$}
\psfrag{a}{(a)}
\psfrag{b}{(b)}
\psfrag{eps23}{$\epsilon^{2/3}$}
\psfrag{C}{$C$}
	\centering
		\includegraphics[width=0.85\textwidth]{./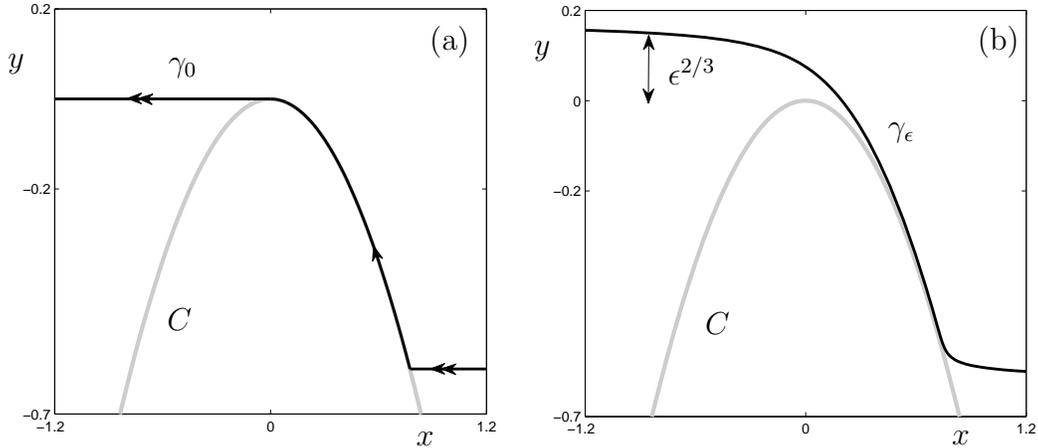}
	\caption{\label{fig:fig4}Illustration for \eqref{eq:fp_fs}. (a) Singular limit $\epsilon=0$ with a candidate trajectory $\gamma_0$ consisting of two fast and one slow segment is shown. (b) Trajectory $\gamma_\epsilon$ for \eqref{eq:fp_fs} with $\epsilon=0.02$ and initial condition $(x(0),y(0))=(1.2,-0.6)$.}	
\end{figure} 

The critical manifold $C=\{(x,y)\in\R^2:y=-x^2\}$ is normally hyperbolic away from the fold bifurcation point $(x,y)=(0,0)$ of the fast subsystem; the point $(0,0)\in C$ is also refered to as a fold point. Observe that the set $C^a:=C\cap\{x>0\}$ are attracting equilibrium points for the fast subsystem while points on $C^r:=C\cap\{x<0\}$ are repelling; see Figure \ref{fig:fig4}. To derive an expression for the slow flow one can differentiate $y=-x^2$ implicitly with respect to $\tau$ giving $\dot{y}=-2x\dot{x}$ which yields
\benn
\dot{x}=-\frac{1}{2x}.
\eenn
Note that the slow flow is not well-defined at $x=0$. However, by rescaling of time $\tau\ra 2x\tau$, which reverses the direction of trajectories on $C^r$, the slow flow can be desingularized. The flow of \eqref{eq:fp_fs} for $\epsilon=0$ can be described by combining trajectories of the fast and slow subsystems; see Figure \ref{fig:fig4}(a). A solution starting in $W^s(C^a)$ approaches it rapidly, then it follows the slow flow on $C$ and finally ``jumps'' at the fold bifurcation point toward $x=-\I$.
\end{ex}

Recall that assumption (1) in Section \ref{sec:intro} requires a critical transition to occur at a point when there is a sudden change from slow dynamics to a fast repelling segment. To make this idea more precise we recall one more definition from fast-slow systems. In the singular limit $\epsilon=0$ trajectories can be considered as concatenations of trajectory segments of the fast and slow subsystems. A candidate \cite{Benoit2,Haiduc} is defined as a homeomorphic image $\gamma_0(t)$ of a real interval $(a,b)$ with $a < b$ where

\begin{itemize}
 \item the interval is partitioned as $a = t_0 < t_1 < \cdots < t_m = b$,
 \item the image of each subinterval $\gamma_0(t_{j-1},t_j)$ is a trajectory of either the fast or the slow subsystem,
 \item and the image $\gamma_0(a,b)$ has an orientation that is consistent with the orientations on each subinterval $\gamma_0(t_{j-1},t_j)$ induced by the fast and slow flows.  
\end{itemize}

Note that we can also view a candidate as a trajectory of a hybrid system. If consecutive images $\gamma_0(t_{j-1},t_j)$ and $\gamma_0(t_{j},t_{j+1})$ are trajectories for different subsystems, i.e. there is a transition at $t_j$ from fast to slow or from slow to fast, then we say that $\gamma_0(t_j)$ is a transition point. Using candidates and transition points we can easily give a rigorous definition of critical transitions.

\begin{defn}
\label{defn:ct}
Let $p=(x_p,y_p)\in C$ be a point where the critical manifold $C$ is not normally hyperbolic. We say that $p$ is a critical transition if there is a candidate $\gamma_0$ so that 
\begin{itemize}
 \item [(C1)] $\gamma_0(t_{j-1},t_j)$ is a normally hyperbolic attracting submanifold of $C$,
 \item [(C2)] $p=\gamma_0(t_j)$ is a transition point,
 \item [(C3)] and $\gamma_0(t_{j-1},t_j)$ is oriented from $\gamma_0(t_{j-1})$ to $\gamma_0(t_j)$. 
\end{itemize}   
\end{defn}  

From a fast-slow systems perspective a critical transition occurs at a bifurcation point $y=y_p$ of the fast subsystem that induces switching from a stable slow motion to a fast motion. Definition \ref{defn:ct} can easily be generalized to more complicated invariant sets of the fast subsystem. For example, if $p$ is a point that lies on a family of fast subsystem periodic orbits we can again define a slow flow by averaging over the periodic orbit \cite{KuehnSlowExit,BerglundGentz}. Then the same definition applies. The next steps are straightforward checks whether several classical bifurcations are critical transitions. From now on we shall restrict ourselves to the case $n=1$ reflecting slow variation of one parameter; without loss of generality we can assume that the bifurcation point of the fast subsystem is located at $(x,y)=(0,0)$ and we assume that $g\equiv 1$ near the origin to simplify the exposition (in principle we will only need that $g(x,y)\geq K>0$ near the origin for some constant $K$ independent of $(x,y)$).
 
\begin{prop}
Suppose $m=1$ so that \eqref{eq:det_gen} is 1-dimensional and that there is a generic fold (or saddle-node) bifurcation at $y=0$. Then the fold bifurcation is also a critical transition.
\end{prop}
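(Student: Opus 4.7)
The plan is to unpack the genericity conditions for a fold, parametrize the critical manifold locally via the implicit function theorem, locate the attracting branch, check that the slow flow is directed into the fold point, and then exhibit a candidate that satisfies (C1)-(C3) of Definition~\ref{defn:ct}. The genericity of a fold at the origin amounts to the four conditions $f(0,0)=0$, $f_x(0,0)=0$, $f_{xx}(0,0)\neq 0$, and $f_y(0,0)\neq 0$. Since $f_y(0,0)\neq 0$, the implicit function theorem applied to $f(x,y)=0$ solves for $y$ locally as $y=h(x)$ with $h(0)=0$. Differentiating the identity $f(x,h(x))\equiv 0$ at $x=0$ yields $h'(0)=0$ and $h''(0)=-f_{xx}(0,0)/f_y(0,0)\neq 0$. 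After reversing the sign of the slow direction if necessary (permitted because the assumption on $g$ is really only that it be bounded away from zero), I may take $h''(0)<0$, so that $C$ is locally a downward-opening parabola with vertex at the fold point.

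Next I identify the attracting and repelling branches and the slow flow direction. Differentiating $f(x,h(x))\equiv 0$ once gives $f_x(x,h(x))=-f_y(x,h(x))h'(x)$, which behaves like $-f_y(0,0)h''(0)x$ for small $x\neq 0$; hence $f_x$ is nonzero on $C\setminus\{(0,0)\}$ and changes sign at the fold. One branch $C^a$ has $f_x<0$ (attracting) and the other $C^r$ has $f_x>0$ (repelling), and both are normally hyperbolic away from the fold. Writing $y=h(x)$ on $C$ and using $\dot y=g\equiv 1$, the slow flow reads $\dot x=1/h'(x)$. Since $h''(0)<0$, the critical manifold lies in $\{y\le 0\}$ near the origin with $y\to 0$ as we approach the fold along either branch, and $\dot y=1>0$ forces the slow flow to move toward the fold point on $C^a$ in finite slow time.

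With the dynamics understood, constructing the candidate is immediate. Choose any $q=(x_0,h(x_0))\in C^a$ sufficiently close to the origin and let $\gamma_0$ on $[t_{j-1},t_j]$ be the slow trajectory from $\gamma_0(t_{j-1})=q$ to $\gamma_0(t_j)=(0,0)=p$; extend $\gamma_0$ on $[t_j,t_{j+1}]$ by the fast subsystem trajectory at parameter value $y=0$ emanating from $(0,0)$ along the unstable direction opposite to $C^a$, exactly as in Example~\ref{ex:fold}. Condition (C1) holds because $\gamma_0(t_{j-1},t_j)$ lies in $C^a$, a normally hyperbolic attracting submanifold of $C$. Condition (C2) holds because at $p$ the candidate switches from a slow to a fast segment, so $p$ is a transition point and (by hypothesis) a non-normally-hyperbolic point of $C$. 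Condition (C3) holds because the slow segment is oriented from $q$ to $p$ by construction, in agreement with the direction of the slow flow computed above. Therefore $p=(0,0)$ is a critical transition.

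The only real bookkeeping issue, rather than a genuine obstacle, is the sign of $h''(0)$: had I not reduced to $h''(0)<0$, the parabola would open upward, both branches would sit in $\{y>0\}$ near the origin, and $\dot y=1>0$ would push the slow flow away from the fold on both sides, so the fold would not be reached and would not be a critical transition. The WLOG reduction simply fixes the compatible orientation; beyond that the proof is a direct unpacking of Definition~\ref{defn:ct}.
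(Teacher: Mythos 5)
Your proof is correct, and the endgame is the same as the paper's: exhibit the candidate consisting of the attracting branch of $C$ followed by the fast escape segment at $y=0$, and check (C1)--(C3). Where you differ is in how the local structure is established. The paper simply invokes topological conjugacy to the normal form $x'=-y-x^2$, $y'=\epsilon$ and reads everything off from that explicit model, whereas you derive the parabolic shape of $C$, the attracting/repelling split, and the direction of the slow flow directly from the genericity conditions $f(0,0)=0$, $f_x(0,0)=0$, $f_{xx}(0,0)\neq 0$, $f_y(0,0)\neq 0$ via the implicit function theorem. Your route is more elementary (no appeal to the normal-form/conjugacy theorem) and has the virtue of surfacing an orientation issue that the paper's choice of normal form silently builds in: a fold is a critical transition only when the slow drift carries the attracting branch \emph{into} the fold point, i.e.\ when (in your parametrization) $h''(0)<0$ given $\dot y>0$; with the opposite orientation there is no candidate satisfying (C1)--(C3). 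One small caveat on your WLOG: the paper's standing assumption is $g\geq K>0$, so literally ``reversing the sign of the slow direction'' is not available within that convention --- and relabelling $y\mapsto -y$ flips $h''(0)$ and the sign of $g$ simultaneously, leaving the drift direction relative to the fold invariant. So the correct reading is that the proposition implicitly assumes the attracting branch exists for $y<0$ and is driven toward the bifurcation, exactly as in the paper's normal form; your final paragraph makes this hypothesis explicit rather than eliminating it, which is a fair and arguably clarifying observation.
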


\begin{proof}
Near a generic fold bifurcation the flow is topologically conjugate to the normal form \cite{GH} of a fold bifurcation  
\beann
x'&=&-y-x^2,\\
y'&=&\epsilon.
\eeann
The critical manifold is $C=\{y=-x^2\}$ and $C^a:=C\cap \{x>0\}$ is normally hyperbolic and attracting. Then the candidate
\benn
\gamma_0=C^a\cup \{[0,-\I),\times\{0\}\}
\eenn
shows that the fold bifurcation is a critical transition.
\end{proof}

The main idea for the fold bifurcation and all the other bifurcations discussed below is illustrated in Figure \ref{fig:fig3}.

\begin{figure}[htbp]
\psfrag{x}{$x$}
\psfrag{x1}{$x_1$}
\psfrag{y}{$y$}
\psfrag{a}{(a) fold bifurcation}
\psfrag{b}{(b) Hopf bifurcation}
\psfrag{c}{(c) pitchfork bifurcation}
\psfrag{d}{(d) transcritical bifurcation}
	\centering
		\includegraphics[width=0.8\textwidth]{./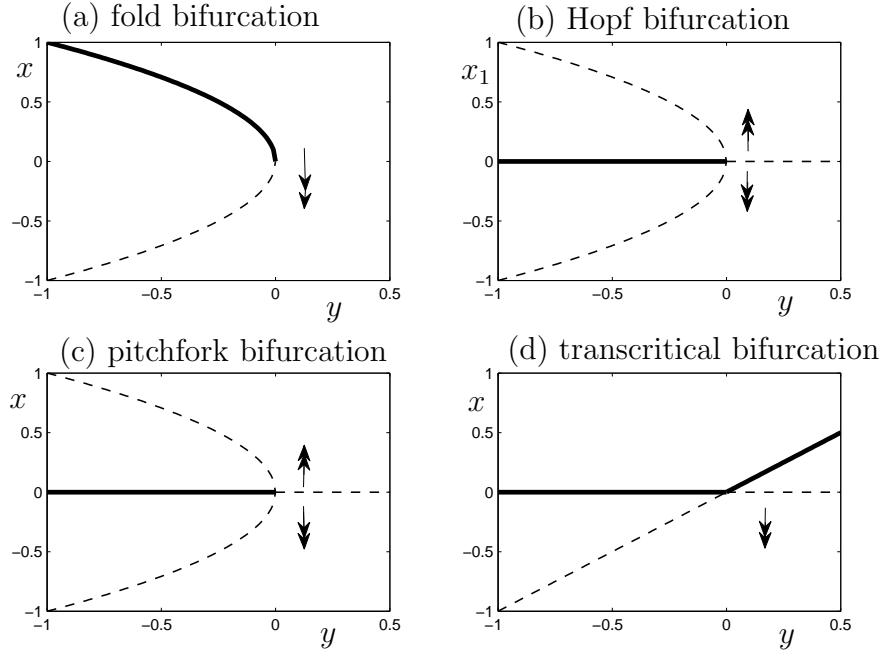}
	\caption{\label{fig:fig3} Fast subsystem bifurcation diagrams for four bifurcations that are critical transitions. Solid curves indicate stability, dashed curves instability. For the Hopf bifurcation in (b) only the projection onto $(x_1,y)$ is shown. Double arrows indicate the flow of the fast subsystem.}	
\end{figure} 

\begin{prop}
Suppose $m=2$ so that \eqref{eq:det_gen} is a planar system. Suppose there is a generic Hopf bifurcation of the fast subsystem at $y=0$ with first Lyapunov coefficient $l_1\neq 0$. The Hopf bifurcation is a critical transition if it is subcritical ($l_1>0$). If it is supercritical ($l_1<0$) then the transition is not critical. 
\end{prop}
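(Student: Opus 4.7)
The plan is to reduce to the Hopf normal form for the fast subsystem and check Definition \ref{defn:ct} directly in each case. By the genericity hypotheses (transversality of the eigenvalue crossing and $l_1 \neq 0$) the fast subsystem near the Hopf point is topologically conjugate to its normal form, which in suitable polar coordinates $(r,\theta)$ centred at the bifurcating equilibrium reads
\benn
\dot r = y\,r + l_1 r^3 + O(r^5), \qquad \dot\theta = \omega + O(r^2),
\eenn
coupled with the slow equation $\dot y = g \equiv 1$. The local piece of the critical manifold is $C_{\text{loc}} = \{r=0\}$, which is attracting for $y<0$ and repelling for $y>0$; normal hyperbolicity fails precisely at $p=(0,0)$, so $p$ is the only candidate critical transition point.

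For the subcritical case $l_1>0$, I would take $\gamma_0(t_{j-1},t_j) = C_{\text{loc}} \cap \{y\in(-\delta,0)\}$, oriented by increasing $y$; this is an attracting normally hyperbolic submanifold of $C$ oriented so that $p$ is its right endpoint, so (C1) and (C3) hold. To see that $p$ is a genuine slow-to-fast transition point, note that at $y=0$ the fast flow satisfies $\dot r = l_1 r^3 + O(r^5) > 0$ for small $r>0$, so every fast orbit starting just off the origin spirals outward; taking $\gamma_0(t_j,t_{j+1})$ to be such a fast orbit verifies (C2).

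For the supercritical case $l_1<0$, the same slow segment into $p$ is essentially forced, as it is the only local attracting normally hyperbolic submanifold of $C$ ending at $p$ with the correct orientation. The heart of the proof is to show that no continuation past $p$ produces a fast segment. At $y=0$ we have $\dot r = l_1 r^3 + O(r^5) < 0$ for small $r>0$, so the origin is still weakly attracting for the fast flow at the bifurcation value, and $p$ is itself a fast-subsystem equilibrium from which no non-trivial fast orbit emanates. For $y>0$ the fast subsystem possesses an attracting limit cycle of amplitude $r_\star(y) = \sqrt{y/|l_1|} + O(y)$ that traps every nearby fast orbit. Hence the only admissible continuation of $\gamma_0$ past $p$ is the slow segment obtained by averaging along the family of newborn stable periodic orbits (as mentioned in the remark following Definition \ref{defn:ct}), making $p$ a slow-to-slow junction and violating (C2).

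The main obstacle is this non-existence argument in the supercritical case: ruling out every fast continuation from $p$ requires controlling the dynamics in a full neighbourhood of $p$, not just along $C$. It reduces to the two local facts above---weak attractivity of the origin at $y=0$ via the sign of $l_1$, and trapping by the newborn stable limit cycle for $y>0$---each of which depends on the topological normal form being valid throughout a neighbourhood of the Hopf point, i.e.\ on the genericity of the bifurcation.
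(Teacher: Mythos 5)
Your proposal is correct and follows essentially the same route as the paper: reduce to the Hopf normal form, exhibit for $l_1>0$ the candidate consisting of the attracting branch of $C$ followed by an outward-spiralling fast orbit whose backward limit is the origin, and for $l_1<0$ observe that the weak asymptotic stability of the origin for the fast subsystem at $y=0$ (together with the fact that the bifurcating periodic orbits and $\{y>0,x=0\}$ only yield slow continuations) rules out any fast segment emanating from $p$, so (C2) fails. The only cosmetic difference is that you work in polar coordinates and make the sign of $\dot r$ at $y=0$ explicit, where the paper phrases the same facts in terms of the unstable manifold of the origin and asymptotic stability.
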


\begin{proof} 
By genericity of the Hopf bifurcation we can consider the normal form
\be
\label{eq:Hopf_Euclidean}
\begin{array}{lcl}
x_1' & = & yx_1-x_2+l_1x_1(x_1^2+x_2^2),\\
x_2' & = & x_1+yx_2+l_1x_2(x_1^2+x_2^2),\\
y'&=& \epsilon.\\
\end{array}
\ee 
The equilibrium point $x^*=0$ of the fast subsystem is stable for $y<0$ and loses stability at $y=0$ as a pair of complex conjugate eigenvalues of $(D_xf)(0,y)$ passes through the imaginary axis at $y=0$. Suppose first that $l_1>0$ and consider the candidate
\benn
\gamma_0=\{x=0,y<0\}\cup \mathcal{S}
\eenn
where $\mathcal{S}$ is a spiral trajectory lying in fast subsystem unstable manifold of $(x_1,x_2)=(0,0)$ at $y=0$; this concludes the first part of the proof. If $l_1<0$ then the fast subsystem Hopf bifurcation is supercritical so that there are stable periodic orbits of amplitude $\sqrt{y}$ for $y>0$. Suppose there exists a candidate $\gamma_0$ through $(x,y)=(0,0)$ that satisfies (C1)-(C3) with $\gamma_0(t_j)=(0,0)$. Note that by (C1) we must have that $\gamma_0(t_{j-1},t_j)$ is contained in $\{y<0,x_1=0=x_2\}$. By (C2) we note that $\gamma_0(t_j,t_{j+1})$ cannot be contained in the bifurcating family of periodic orbits or in $\{y>0,x_1=0=x_2\}$. Since $(0,0)=(x_1,x_2)$ is asymptotically stable as an equilibrium point for the fast subsystem we can conclude that (C2) can never be satisfied for any candidate.  
\end{proof}

The fold and Hopf bifurcations are the only generic bifurcations occurring in one-parameter families of equilibrium points of flows \cite{Kuznetsov}. Under additional assumptions on the structure of the equations (e.g. assuming symmetries) one also often considers the following two one-parameter bifurcations:
\benn
\begin{array}{lcll}
x' & = & yx+\alpha x^3 & \qquad \text{pitchfork bifurcation,}\\
x' & = & yx-x^2 & \qquad \text{transcritical bifurcation.} \\
\end{array}
\eenn 
The analysis of the pitchfork bifurcation is completely analogous to the Hopf bifurcation case. Indeed, recall that the Hopf normal form \eqref{eq:Hopf_Euclidean} can be transformed into polar coordinates $(r,\theta)\in(\R^+,S^1)$
\beann
r'&=&yr+l_1r^3,\\
\theta'&=&1.
\eeann

\begin{prop}
The pitchfork bifurcation is a critical transition when it is subcritical ($\alpha>0$) and it is not a critical transition if it is supercritical ($\alpha<0$).
\end{prop}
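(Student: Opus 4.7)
The plan is to mirror the proof for the Hopf bifurcation, exploiting the analogy between the polar-coordinate form of the Hopf normal form and the pitchfork normal form. First I would write down the fast-slow system
\benn
x'=yx+\alpha x^3,\qquad y'=\epsilon,
\eenn
and identify the critical set $C=\{x=0\}\cup\{y=-\alpha x^2\}$. A direct computation gives $\partial_x f(0,y)=y$, so the trivial branch $\{x=0\}$ is attracting for $y<0$ and repelling for $y>0$; on the non-trivial branch $y=-\alpha x^2$ one has $\partial_x f=2\alpha x^2$, so this branch is repelling if $\alpha>0$ (subcritical) and attracting if $\alpha<0$ (supercritical). In both cases the fast subsystem bifurcation point $(x,y)=(0,0)$ lies on the critical set and fails to be normally hyperbolic.

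For the subcritical case ($\alpha>0$), I would construct an explicit candidate
\benn
\gamma_0=\{(0,y):y\leq 0\}\cup\mathcal{F},
\eenn
where $\mathcal{F}$ is a fast-subsystem trajectory at $y=0$ lying in the unstable set of the degenerate equilibrium $x=0$. Since at $y=0$ the fast equation reduces to $x'=\alpha x^3$ with $\alpha>0$, the origin is unstable and a trajectory starting at any $x\neq 0$ blows up to $\pm\infty$, providing a legitimate fast segment. The slow segment $\{(0,y):y<0\}$ is normally hyperbolic and attracting, verifying (C1); the point $p=(0,0)$ is a transition from the slow segment to $\mathcal{F}$, verifying (C2); and the orientation induced by $y'=\epsilon>0$ points toward $p$, verifying (C3).

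For the supercritical case ($\alpha<0$), I would argue by contradiction exactly as in the Hopf proof. Suppose a candidate $\gamma_0$ with $\gamma_0(t_j)=(0,0)$ satisfies (C1)--(C3). Condition (C1) forces $\gamma_0(t_{j-1},t_j)$ to lie in the attracting component of $C$ with $y<0$, namely $\{(0,y):y<0\}$. At $y=0$ the fast equation is $x'=\alpha x^3$ with $\alpha<0$, so the origin is asymptotically stable as an equilibrium of the fast subsystem; thus there is no non-trivial fast-subsystem trajectory leaving $(0,0)$ along which $\gamma_0(t_j,t_{j+1})$ could be taken. The only remaining options for $\gamma_0(t_j,t_{j+1})$ are pieces of the critical manifold, either the repelling branch $\{(0,y):y>0\}$ or the attracting non-trivial branch $\{y=-\alpha x^2\}$, but either choice makes $(0,0)$ a continuation of slow motion rather than a fast/slow transition, contradicting (C2).

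The main obstacle is conceptual rather than computational: one needs to check carefully that, in the supercritical setting, the two branches of the non-trivial parabola $\{y=-\alpha x^2\}$ touch $(0,0)$ tangentially at the slow level, so no candidate can use them as a fast segment emerging from the transition point. Once this point is accepted, the remainder of the argument is a direct transcription of the Hopf proposition, with the bifurcating pair of equilibria playing the role that the bifurcating family of periodic orbits played there.
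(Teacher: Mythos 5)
Your argument is correct and takes the same route the paper intends: the paper gives no separate proof for this proposition, remarking only that the analysis is ``completely analogous to the Hopf bifurcation case'' via the radial equation $r'=yr+l_1r^3$, and your explicit candidate for $\alpha>0$ together with the contradiction from the asymptotic stability of $x=0$ under $x'=\alpha x^3$ for $\alpha<0$ is precisely the transcription of that Hopf argument to the pitchfork normal form. No gaps.
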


The transcritical bifurcation case is slightly more interesting.

\begin{prop}
The transcritical bifurcation is a critical transition.
\end{prop}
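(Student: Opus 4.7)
The plan is to follow the template set by the fold-bifurcation proof: reduce to the topological normal form, locate the attracting branch of the critical manifold, and exhibit a candidate whose transition point is the bifurcation point. By genericity one may replace the fast equation near the bifurcation by its normal form, and work with the planar fast-slow system
\benn
x' = yx - x^2, \qquad y' = \epsilon.
\eenn
The critical set factors as $f(x,y) = x(y-x)$, so $C = \{x=0\} \cup \{x=y\}$ is a union of two lines meeting at the origin. Computing $\partial_x f = y - 2x$ shows that $C^a := \{(0,y) : y < 0\}$ is normally hyperbolic attracting (eigenvalue $y < 0$), the branch $\{(y,y) : y > 0\}$ is also normally hyperbolic attracting (eigenvalue $-y < 0$), and the origin is the only non-hyperbolic point of $C$.

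Next I would construct the candidate. At the bifurcation parameter $y = 0$ the layer equation reduces to $x' = -x^2$, whose phase line has the origin as a semi-stable fixed point: trajectories with $x > 0$ approach the origin while trajectories with $x < 0$ escape to $-\infty$ in finite time and have the origin as their limit in backward time. I would set
\benn
\gamma_0 := C^a \cup \{(x,0) : x \leq 0\},
\eenn
oriented so that the slow segment runs from $y \to -\infty$ up to the origin and the fast segment runs from the origin out to $x = -\infty$. With $p = (0,0)$ the three conditions of Definition \ref{defn:ct} are then immediate: (C1) because the slow segment lies in the normally hyperbolic attracting set $C^a$, (C3) by construction, and (C2) because on one side of $p$ the candidate lies on a slow manifold while on the other side it lies on a trajectory of the fast subsystem with $y$ held at $0$.

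The main obstacle is conceptual rather than computational: the origin is the meeting point of two attracting branches of $C$, so there is a competing natural continuation of the slow orbit, namely onto $\{x = y, y > 0\}$, for which $(0,0)$ is not a fast-slow transition point at all. Since Definition \ref{defn:ct} only asks for the \emph{existence} of some candidate satisfying (C1)--(C3), this alternative continuation is harmless, but one should justify that the escape trajectory along the negative $x$-axis at $y = 0$ is a legitimate fast-subsystem piece to be spliced into a candidate. I would dispatch this exactly as in the subcritical Hopf proof: the negative $x$-axis plays the role of the unstable set of the bifurcation point of the layer equations, and the semi-hyperbolicity of $x' = -x^2$ at the origin does not obstruct its use, because it is a well-defined forward-invariant orbit whose backward limit is precisely $(0,0)$.
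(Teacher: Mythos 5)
Your proof is correct and follows essentially the same route as the paper: both pass to the normal form $x'=yx-x^2$, $y'=\epsilon$ and exhibit the candidate $\gamma_0=\{x=0,\,y<0\}\cup\{x<0,\,y=0\}$, with the attracting branch $\{x=0,\,y<0\}$ supplying (C1) and the escaping fast orbit at $y=0$ supplying (C2)--(C3). Your additional remarks on the semi-stable layer dynamics at the origin and the competing continuation onto $\{x=y,\,y>0\}$ are sound elaborations of points the paper itself only notes after the proof (namely that not all fast segments escape at this transition), but they do not change the argument.
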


\begin{proof}
We can again work with the normal form. Consider the candidate
\benn
\gamma_0=\{x=0,y<0\}\cup \{x<0,y=0\}
\eenn
which shows that we have a critical transition.
\end{proof}

Note carefully that there is a candidate trajectory for the transcritical case that has $(0,0)$ as a transition point but that does not satisfy (C1). In particular, not all fast segments escape at a critical transition. Furthermore, the assumption that $g\neq 0$ near the origin does suffice to make the transcritical bifurcation a critical transition. The fast-slow structure naturally suggests additional quantitative measures for critical transitions; see e.g. Definitions \ref{defn:recovery} and \ref{defn:jump} below. Furthermore our definition can easily be extended to any possible bifurcation scenario in a fast-slow system including higher codimension bifurcations and global bifurcations e.g. the lists of bifurcations in \cite{ThompsonSieber} can be subsumed into a fast-slow systems framework.\\ 

Critical slowing down is an indicator of how far we are away from a critical transition point \cite{vanNesScheffer}. Recall that if we have a stable solution point $X=X(t)$ for \eqref{eq:det_gen} and want to consider the evolution of perturbations $X+u$ for $\|u\|$ sufficiently small then 
\be
\label{eq:variational}
u'=f(X+u)-f(X)\approx (D_xf)(X)u
\ee 
which is the usual variational equation \cite{Kuznetsov}. It can be used to describe how quickly a perturbation of an asymptotically stable equilibrium point will decay to zero.

\begin{defn}
\label{defn:recovery}
For $(x,y)$ in the attracting sheet $C^a$ of the critical manifold, perturbations $z = (x+u,y)$ decay to
$(x,y)$ at an exponential rate $\exp(\lambda_u)$. Note that $\lambda_u < 0$ is negative; it is called the Lyapunov exponent of $z$. The largest Lyapunov exponent has smallest magnitude and is called the leading Lyapunov exponent. If the leading Lyapunov exponent of $(x,y) \in C^a$ is $O(y^{\alpha})$ then we suggest to call $\alpha$ the recovery exponent.
\end{defn} 

The exponent $\alpha$ provides a measure how quickly perturbations in the fast direction will decay near a bifurcation depending on the distance in parameter space to the critical transition. A larger $\alpha$ indicates slower decay. The recovery exponent can easily be calculated for the four bifurcations discussed above. 

\begin{prop}
\label{eq:prop_recovery}
The recovery exponent $\alpha$ is given by 
\benn
\alpha=\left\{\begin{array}{cl}
\frac12 & \text{fold bifurcation,}\\
1 & \text{Hopf, pitchfork and transcritical bifurcation.}
\end{array}\right.
\eenn
\end{prop}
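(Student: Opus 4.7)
The plan is to compute the leading Lyapunov exponent directly for each of the four normal forms already recalled in Section \ref{sec:det1}, by linearizing the fast vector field along the attracting sheet $C^a$ of the critical manifold. Since the leading Lyapunov exponent at $(x,y)\in C^a$ is, by definition, the largest real part of an eigenvalue of $(D_xf)(x,y)$, and since $C^a$ and $f$ are explicit in each normal form, the recovery exponent falls out by reading off the dominant power of $y$.

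For the fold normal form $f(x,y)=-y-x^2$, the attracting sheet is $C^a=\{(x,y):x=\sqrt{-y},\ y<0\}$, as in Example \ref{ex:fold}. Then $(D_xf)(x,y)=-2x$, which on $C^a$ equals $-2\sqrt{-y}=O(|y|^{1/2})$, giving $\alpha=1/2$.

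For the subcritical Hopf normal form \eqref{eq:Hopf_Euclidean} ($l_1>0$), the attracting sheet is $C^a=\{(0,0,y):y<0\}$, and the $(2\times 2)$ Jacobian $(D_xf)(0,0,y)$ has eigenvalues $y\pm i$, so the leading Lyapunov exponent is $y=O(|y|)$ and $\alpha=1$. For the subcritical pitchfork $x'=yx+\alpha x^3$ with $\alpha>0$, the attracting sheet is $\{x=0,\ y<0\}$, and $(D_xf)(0,y)=y+3\alpha x^2|_{x=0}=y$, so again the recovery exponent is $1$. For the transcritical $x'=yx-x^2$, the attracting sheet is $\{x=0,\ y<0\}\cup\{x=y,\ y>0\}$; the linearization $(D_xf)(x,y)=y-2x$ equals $y$ on the first branch and $-y$ on the second, so the leading Lyapunov exponent is $O(|y|)$ in both cases and $\alpha=1$.

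The computations are routine once the correct attracting branches are isolated, so I do not expect any serious obstacle. The only conceptual point worth underlining is the contrast between the fold and the other three cases: because the fold's attracting sheet is tangent to the fast direction at the bifurcation point ($y=-x^2$), the Jacobian vanishes only like $\sqrt{|y|}$ rather than linearly in $y$. This is the geometric source of the exceptional $1/2$ exponent and explains why the fold displays stronger critical slowing down than the Hopf, pitchfork and transcritical transitions.
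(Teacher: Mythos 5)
Your proposal is correct and follows essentially the same route as the paper: linearize the fast vector field along the attracting branch of each normal form and read off the power of $y$ in the leading eigenvalue of $(D_xf)$ (the paper merely prefaces this with the remark that center manifold theory justifies working with the normal forms, and leaves the Hopf and transcritical computations to the reader, which you have written out explicitly). No gaps.
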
 

\begin{proof}
Center manifold theory \cite{Carr} implies that it suffices to consider the vector field on the center manifold to compute the leading Lyapunov exponent for an asymptotically stable equilibrium point near a bifurcation. For the fold bifurcation we find that \eqref{eq:variational} is given by
\benn
u'=\left.\frac{\partial}{\partial X}(-Y-X^2)\right|_{X=\sqrt{-Y}}u=-2\sqrt{-Y}u.
\eenn
Therefore $H=-2\sqrt{-Y}=O(Y^{1/2})$. For the pitchfork bifurcation one gets
\benn
u'=\left.\frac{\partial}{\partial X}(YX+X^3)\right|_{X=0}u=Yu.
\eenn
The calculations for Hopf and transcritical bifurcations are equally easy. 
\end{proof}

Although Proposition \ref{eq:prop_recovery} is almost entirely obvious from a mathematical perspective it is often ignored in applications. In particular, we point out that different bifurcations can also lead to different quantitative slowing down effects. This idea is detailed for all bifurcations up to codimension two in \cite{KuehnCT2}.

\section{Fast-Slow Systems II: Dynamic Bifurcation}
\label{sec:det2} 

In the previous section we have seen that fast-slow systems provide a structural view on critical transitions. The slow change of the parameter drives the system toward a fast subsystem bifurcation at which a rapid transition occurs. We suggest to quantify the fast-slow critical transitions further.
 
\begin{defn}
\label{defn:jump}
Let $\gamma_0^c$ denote the first fast segment of a candidate trajectory starting at a critical transition point $p$. Let $\omega_c(p)$ denote the $\omega$-limit set of $\gamma_0^c$ under the fast flow. Define
\beann
l^i(p)&:=&\inf_{\gamma^c_0}\left\{d(p,\omega_c(p))\right\},\\
l^s(p)&:=&\sup_{\gamma^c_0}\left\{d(p,\omega_c(p))\right\}.
\eeann
\end{defn}

Basically $l^i(p)$ is the distance to the closest fast subsystem attractor we can jump to by starting at a critical transition while $l^s(p)$ measures the distance to the most distant attractor. In Example \eqref{ex:fold} we have $l^i(0,0)=\I=l^s(0,0)$; the same holds for normal forms of subcritical Hopf and pitchfork bifurcations. However, it is interesting to note that for the transcritical bifurcation we have $l^i=0$ and $l^s=\I$. We can use $l^{i,s}(p)$ to quantify what we described in Section \ref{sec:intro} as ``jumping to a far-away attractor''.\\

The theory of fast-slow systems provides a description of the flow near a fold critical transition for the full system with $0<\epsilon\ll1$. We briefly review this result here; see also \cite{DRvdP,KruSzm3} for further details. Consider the planar fast-slow system given by
\benn
\label{eq:fp_thm}
\begin{array}{lcl}
x' & = & -y-x^2,\\
y' & = & -\epsilon.
\end{array}
\eenn
Decompose the critical manifold as $C=C^a\cup \{(0,0)\}\cup C^r$ where 
\benn
C^a=C\cap \{x>0\}\qquad \text{and}\qquad C^r=C\cap \{x<0\}.
\eenn
For $\rho>0$ sufficiently small and a suitable interval $J\subset\R$, define a section $\Delta^{in}=\{(x,-\rho^2):x\in J\}$ transverse to $C^a$ and define a section $\Delta^{out}=\{(-\rho,y):y\in \R\}$ transverse to the fast subsystems. The next result describes the resulting flow map between the two sections \cite{KruSzm3}.

\begin{thm}
\label{thm:foldKS}
Near a generic fold bifurcation of the fast subsystem the extension of $C^a_\epsilon$ under the flow passes through $\Delta^{out}$ at a point $(-\rho,O(\epsilon^{2/3}))$. Furthermore, the transition map from $\Delta^{in}$ to $\Delta^{out}$ is a contraction with contraction rate $O(e^{-K/\epsilon})$.
\end{thm}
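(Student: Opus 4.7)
The plan is to use geometric desingularization (blow-up) of the fold point, following the Krupa--Szmolyan framework cited in the excerpt. Fenichel's Theorem already gives an attracting slow manifold $C^a_\epsilon$ that is $O(\epsilon)$-close to $C^a$ on any compact subset bounded away from the origin; in particular, the flow from $\Delta^{in}$ is strongly attracted to $C^a_\epsilon$ with contraction rate $\exp(-K/\epsilon)$, since the leading eigenvalue of the fast subsystem along $C^a$ is uniformly negative and bounded away from zero outside a neighborhood of the fold. So the content of the theorem is really to describe what happens in a small neighborhood of the fold where normal hyperbolicity fails, and to measure the height at which $C^a_\epsilon$ exits through $\Delta^{out}$.

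To analyze this neighborhood I would apply the quasihomogeneous blow-up
\[
x=\bar{r}\bar{x},\qquad y=\bar{r}^{\,2}\bar{y},\qquad \epsilon=\bar{r}^{\,3}\bar{\epsilon},
\]
with $(\bar{x},\bar{y},\bar{\epsilon})\in S^2$ and $\bar{r}\geq 0$, which desingularizes the origin into a two-sphere. The analysis is carried out in three coordinate charts: an entry chart $K_1$ ($\bar{y}=-1$), a rescaling chart $K_2$ ($\bar{\epsilon}=1$), and an exit chart $K_3$ ($\bar{x}=-1$). In $K_1$ the blown-up vector field still has a normally hyperbolic branch corresponding to $C^a$, and by a Fenichel-type argument this branch extends as an invariant manifold up to a small cross-section corresponding to the boundary with $K_2$. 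In $K_3$ the blown-up equations are a regular perturbation of the fast flow with $x\to-\infty$, so trajectories leaving the sphere are guided directly onto $\Delta^{out}$.

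The heart of the argument is the rescaling chart $K_2$, where setting $x=\epsilon^{1/3}x_2$, $y=\epsilon^{2/3}y_2$ converts the problem into the $\epsilon$-independent Riccati-type system
\[
x_2' = -y_2-x_2^{\,2},\qquad y_2' = -1.
\]
This system has a unique distinguished solution $\gamma_2$ whose forward trajectory is asymptotic to the parabola $y_2=-x_2^{\,2}$ as $y_2\to-\infty$ (the Riccati special solution), and $\gamma_2$ crosses the hyperplane $x_2=-\rho\,\epsilon^{-1/3}$ at a finite $y_2$-value of order $O(1)$. Blowing this point down gives $y = \epsilon^{2/3} y_2 = O(\epsilon^{2/3})$, which is precisely the claimed exit height; matching $\gamma_2$ to the continuation of $C^a_\epsilon$ obtained in $K_1$ (a standard exchange-lemma-style matching in the blown-up space) identifies $\gamma_2$ with the trajectory followed by the slow manifold. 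I expect the main technical obstacle to be exactly this matching: one must show that the unique orbit in $K_2$ that connects to $C^a_\epsilon$ entering from $K_1$ is the distinguished Riccati solution, which requires a careful application of the implicit function theorem to pairs of invariant manifolds in $K_2$ and control of their transverse intersection uniformly in $\bar{r}$.

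Finally, the contraction rate from $\Delta^{in}$ to $\Delta^{out}$ is obtained by splitting the passage into three pieces. From $\Delta^{in}$ to the boundary of the blow-up neighborhood the flow is uniformly hyperbolic along $C^a_\epsilon$ with rate bounded away from zero, giving contraction $\exp(-K_1/\epsilon)$. Inside the blown-up region the total traversal time is $O(\epsilon^{-2/3})$ on the fast time scale (this is the time in $K_2$ times $\epsilon^{-1/3}$), but the variational equation along the distinguished orbit still has negative time-average, so it contributes at worst an $O(1)$ factor after blow-down, which is absorbed into the exponential. The composition therefore yields a contraction rate of the form $O(e^{-K/\epsilon})$, completing the proof.
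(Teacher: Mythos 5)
The paper does not prove Theorem \ref{thm:foldKS} itself but quotes it from Krupa and Szmolyan \cite{KruSzm3} (see also \cite{DRvdP}), and your blow-up argument --- the quasihomogeneous weights $(1,2,3)$ for $(x,y,\epsilon)$, the three charts $K_1,K_2,K_3$, the distinguished Riccati solution in the rescaling chart producing the $O(\epsilon^{2/3})$ exit height, and the exponential contraction accumulated along the normally hyperbolic passage before the fold --- is precisely the strategy of that reference. The only quibble is that the net factor contributed by the passage through $K_2$ and $K_3$ is algebraic in $\epsilon$ rather than $O(1)$ (the variational equation $u'=-2x_2u$ expands once $x_2<0$, up to a factor of order $\rho^2\epsilon^{-2/3}$ before reaching $\Delta^{out}$), but this is still absorbed into $O(e^{-K/\epsilon})$ after shrinking $K$, so your conclusion stands.
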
  

One way to think of Theorem \ref{thm:foldKS} is that the trajectory of the full system does not jump at the exact fold bifurcation point but is shifted or delayed in the slow direction by $O(\epsilon^{2/3})$; see Figure \ref{fig:fig4}(b). For Hopf, pitchfork and transcritical bifurcations we also observe bifurcation delay. We shall briefly review the results for the delayed Hopf bifurcation in the simplest case; for details see \cite{Neishtadt1,Neishtadt2,Neishtadt3}. Consider a fast-slow system
\be
\label{eq:general21}
\begin{array}{lcl}
x'&=&f(x,y),\\
y'&=&\epsilon,\\
\end{array}
\ee
with $(x,y)\in\R^{2+1}$. Suppose the fast subsystem has a generic Hopf critical transition at $y=0$. Suppose for simplicity that
\benn
C=\{x_1=0=x_2\}=C^a\cup\{(0,0,0)\}\cup C^r
\eenn 
where $C^a$ is attracting for $y<0$ and repelling for $y>0$. Denote the complex conjugate pair of eigenvalues of $(D_xf)(0,0,y)$ by $\lambda_{1,2}(y)$. Consider a trajectory of the full system that enters an $O(\epsilon)$-neighborhood of $S^a$ at $y_a$ and leaves an $O(\epsilon)$-neighborhood at $y_r$; see Figure \ref{fig:fig5}. The complex phase is defined as
\benn
\Psi(\tau)=\int_{0}^\tau \lambda_1(s)ds
\eenn 
and the way-in/way-out map $\Pi$ that maps a time $\tau<0$ to a time $\Pi(\tau)>0$ is
\be
\label{eq:wayinwayout}
\text{Re}[\Psi(\tau)]=\text{Re}[\Psi(\Pi(\tau))].
\ee

\begin{figure}[htbp]
\psfrag{x1}{$x_1$}
\psfrag{y}{$y$}
\psfrag{ya}{$y_a$}
\psfrag{yr}{$y_r$}
\psfrag{C}{$C$}
\psfrag{gamma}{$\gamma$}
	\centering
		\includegraphics[width=0.75\textwidth]{./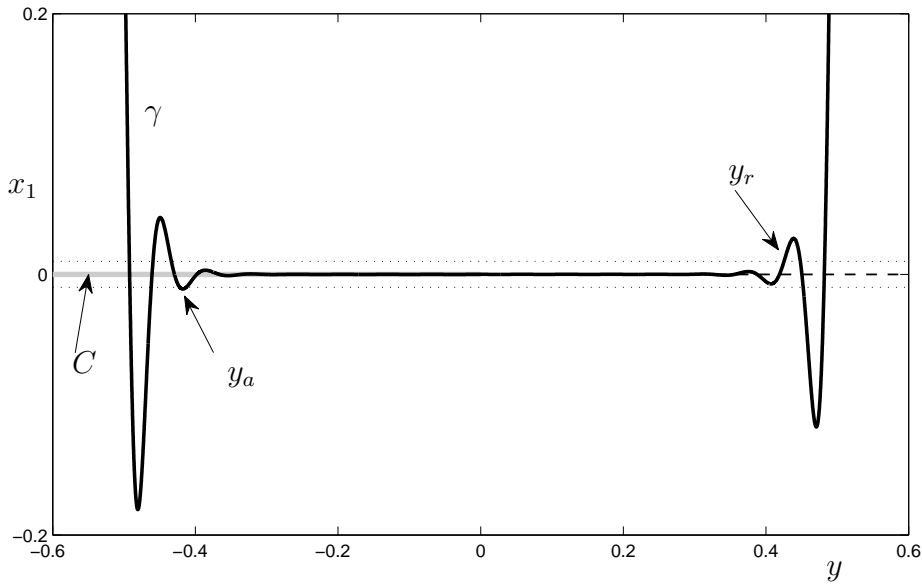}
	\caption{\label{fig:fig5}Simulation of \eqref{eq:Hopf_Euclidean} with the appended equation $y'=\epsilon$; $l_1=1$, $\epsilon=0.01$ and the initial point is $(x_1(0),x_2(0),y(0))=(0.3,0.3,-0.5)$. The trajectory $\gamma$ approaches $C$, enters a neighborhood $U=\{|x_1|<\epsilon\}$ (dotted lines) at $y_a$, follows the attracting part of the critical manifold exponentially closely, experiences a delay near a subcritical Hopf bifurcation of the fast subsystem and then leaves $U$ at $y_r$.}
\end{figure}

In principle, a few additional technical assumptions are needed on the smoothness of $f$ and the structure of the complex time level sets $\{\tau\in \C:\text{Re}[\Psi(\Pi(\tau))]=k\}\subset \C$. Unfortunately these are lengthy to state in full generality (see \cite{Neishtadt1}) but the following theorem provides the basic idea for most cases of practical interest.

\begin{thm}
\label{thm:nei}
For $0<\epsilon\ll 1$ a solution $\gamma(\tau)=\gamma(\epsilon t)$ of \eqref{eq:general21} approaching $C^a$ near $y_a$ at time $\tau_0$ will be delayed and track the unstable branch $C^r$. The map \eqref{eq:wayinwayout} can be used to approximate the delay time $\Pi(\tau_0)$ and hence to approximate $y_r\approx\Pi(\tau_0)+y_a$.
\end{thm}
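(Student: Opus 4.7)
The plan is to derive the delayed exit formula by (i) reducing to the linearized fast dynamics, (ii) reading off the delay heuristically from an exact solution of the linearization, and (iii) justifying the heuristic in the nonlinear system by analytic continuation into the complex slow-time plane. This is essentially Neishtadt's strategy, and I would follow it as outlined below.

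First I would normalize. Using Fenichel-type coordinates near $C^a\cup C^r$, I would straighten the stable and unstable bundles and put the fast equation in the form
\begin{equation*}
\epsilon \dot{x} = A(y)x + R(x,y,\epsilon),\qquad \dot y = 1,
\end{equation*}
where $A(y)$ has eigenvalues $\lambda_{1,2}(y)$ and $R(x,y,\epsilon)=O(\|x\|^{2}) + O(\epsilon\|x\|)$. For a solution entering the neighborhood $\{\|x\|<\epsilon\}$ at slow time $\tau_0$, the linearized flow is solved explicitly by the transition matrix whose eigenvalues satisfy $|\mu_i(\tau,\tau_0)| = \exp\!\bigl(\epsilon^{-1}\mathrm{Re}[\Psi(\tau)-\Psi(\tau_0)]\bigr)$, with $\Psi(\tau)=\int_0^\tau \lambda_1(s)\,ds$. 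Since $\mathrm{Re}[\Psi]$ is strictly decreasing on $\{\tau<0\}$ and strictly increasing on $\{\tau>0\}$, this already tells us, at the linear level, that $\|x\|$ decays until $\tau=0$ and then grows, returning to its entry size $O(\epsilon)$ exactly when $\mathrm{Re}[\Psi(\tau)]=\mathrm{Re}[\Psi(\tau_0)]$, i.e.\ at $\tau=\Pi(\tau_0)$.

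The real work is in upgrading this heuristic to the full nonlinear system, because on the real axis $\mathrm{Re}[\Psi]$ dips below $\mathrm{Re}[\Psi(\tau_0)]$ for $\tau\in(\tau_0,\Pi(\tau_0))$ and in principle the exponentially small linear solution could be swamped by the nonlinearity $R$ once $\mathrm{Re}[\lambda_1(\tau)]$ turns positive. I would sidestep this by analytically continuing both $f$ and the trajectory into a complex strip $\{\tau\in\C:|\mathrm{Im}\,\tau|<\delta\}$ (which is where the technical assumptions on the complex level sets of $\mathrm{Re}\,\Psi$ enter) and deforming the integration contour from the real segment $[\tau_0,\Pi(\tau_0)]$ to a contour $\Gamma$ on which $\mathrm{Re}\,\Psi$ is monotonically nonincreasing from $\tau_0$ up to a point $\tau^\star$ slightly past the Hopf value. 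Along such a contour the linear part remains a contraction, so a Gronwall / fixed-point argument on the integral equation for $x(\tau)-x_{\text{lin}}(\tau)$ shows $\|x(\tau)\|=O(\|x(\tau_0)\|)$ uniformly along $\Gamma$; in particular, the nonlinear correction is controlled by an exponentially small prefactor and the conclusion of the linear computation survives.

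The main obstacle, and the only place the argument is genuinely delicate, is the third step: constructing the deformed contour and verifying that the required level-set geometry of $\mathrm{Re}[\Psi]$ actually holds. This is precisely why the theorem carries unstated technical hypotheses on the analyticity of $f$ and on the structure of $\{\tau\in\C:\mathrm{Re}[\Psi(\tau)]=k\}$; once those are in force the contour exists and the buffer argument goes through. After returning to the real axis at $\tau^\star$ and letting the now-unstable linear part amplify $\|x\|$ back to size $O(\epsilon)$, one reads off that this happens at $\tau=\Pi(\tau_0)+O(\epsilon\log(1/\epsilon))$, so $y_r\approx y_a+\Pi(\tau_0)$, which is the claim.
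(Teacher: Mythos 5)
The paper does not actually prove Theorem \ref{thm:nei}: it is stated as a review of Neishtadt's stability-loss-delay theorem, with the proof and the precise hypotheses deferred to \cite{Neishtadt1,BaerErneuxRinzel}. So there is no in-paper argument to compare against; what can be said is that your outline is a faithful sketch of the strategy of the cited proof --- linearize about the slow manifold, read off the exit time from the way-in/way-out relation $\text{Re}[\Psi(\tau_0)]=\text{Re}[\Psi(\Pi(\tau_0))]$, and justify the linear heuristic by continuing the (necessarily analytic) vector field and the trajectory into complex slow time, where the propagator can be kept a contraction. You also correctly identify the contour construction as the only genuinely delicate step.

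Two points in your third step deserve sharpening. First, the contour you actually need is (a small perturbation of) the complex level curve $\{\tau\in\C:\text{Re}[\Psi(\tau)]=\text{Re}[\Psi(\tau_0)]\}$ joining $\tau_0$ to $\Pi(\tau_0)$ through the upper or lower half-plane, along which the propagator norm $\exp\bigl(\epsilon^{-1}\text{Re}[\Psi(\tau)-\Psi(s)]\bigr)$ stays bounded by $1$ for every intermediate $s$. A contour on which $\text{Re}\,\Psi$ is merely nonincreasing up to a point ``slightly past'' the Hopf value, followed by real-time amplification, does not by itself close the estimate: the quadratic terms $O(\|x\|^2)$ are in fact already harmless on the real axis (they are doubly exponentially small where the propagator is large), and the terms that genuinely threaten to destroy the delay are the $O(\epsilon)$ inhomogeneous corrections (the offset of the true slow manifold from $C$), which act as a persistent forcing and are amplified by the full factor $\exp\bigl(\epsilon^{-1}[\text{Re}\,\Psi(\tau)-\text{Re}\,\Psi(s)]\bigr)$ once $\text{Re}\,\lambda_1>0$. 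Your Gronwall step should be run against these terms, and it only closes on the level-curve contour. Second, such a connecting level curve exists only up to the so-called buffer point; beyond it the asserted exit at $\Pi(\tau_0)$ fails and the delay saturates. This is not merely an apology to be made in passing --- it is a hypothesis ($\Pi(\tau_0)$ must precede the buffer time, and $f$ must be analytic, not just smooth) without which the statement is false, which is exactly what the paper's reference to ``the structure of the complex time level sets'' is hiding.
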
 

Further details about bifurcation delay and intricate special assumptions of Theorem \ref{thm:nei} can be found in \cite{Neishtadt1,BaerErneuxRinzel}. Pitchfork and transcritical transitions are treated in \cite{KruSzm4}. The delay effect shifts the critical transition in a slowly varying parameter space from $y=0$ to $y\approx y_r>0$. For $y<0$ we can predict that a critical transition is going to occur by critical slowing down; alternatively, for $y>0$ we could also observe perturbations growing exponentially. In particular, if we know $\epsilon$, $y_a$ and the type of the critical transition then we can use the theory of bifurcation delay (or dynamic bifurcation) to predict $y_r$ accurately. Let us point out that it is a key new observation that the delay effect has to be incorporated in the prediction of critical transitions and that it can potentially be useful to find early-warning signs.

\section{Stochastic Dynamical Systems}
\label{sec:stochastic_basic}

The next step is to incorporate stochastic effects to capture the role of noise in critical transitions. We start by reviewing several different viewpoints in the theory of stochastic dynamical systems. Currently the theory is less complete and structured than deterministic ODE theory. Therefore our presentation is necessarily less complete in comparison to deterministic fast-slow theory and just highlights some important ideas and methods.\\

Fix a probability space $(\Omega,\mathcal{F},\P)$ and consider a general It\^{o} stochastic differential equation (SDE)
\be
\label{eq:SDE}
dz_t=A(z_t,t)dt+B(z_t,t)dW_t
\ee
where $z\in \R^N$, $A:\R^N\times \R\ra \R^N$, $B$ is an $l\times N$-matrix and $W_t=(W_{1,t},\ldots,W_{N,t})^T$ is standard Brownian motion with components defined on $(\Omega,\mathcal{F},\P)$; we always assume that the initial conditions are deterministic and that $A$ and $B$ are sufficiently smooth maps so that existence and uniqueness results for SDEs hold \cite{Oksendal}. Alternatively we could also consider a Stratonovich SDE
\be
\label{eq:SDE_Strat}
dz_t=\tilde{A}(z_t,t)dt+\tilde{B}(z_t,t)\circ dW_t
\ee
which can, of course, be converted to an It\^{o} SDE and vice versa \cite{Gardiner}. Nevertheless, it is an important modeling question what formulation one chooses \cite{HorsthemkeLefever}. There are several complementary viewpoints to analyze \eqref{eq:SDE}-\eqref{eq:SDE_Strat} that are all envisioned to be helpful in the understanding of critical transitions.

\begin{itemize}
 \item \textit{Sample paths:} The map $\omega\mapsto z_t(\omega)$ describes a sample path for a given randomness/noise $\omega\in \Omega$. Analyzing sample paths most closely resembles the study of ODEs as one still deals with trajectories.
 \item \textit{Transition probability:} Denote the probability density of $z_t$  starting at $z_0$ at time $t_0$ by $p(z,t)=p(z,t|z_0,t_0)$ associated to \eqref{eq:SDE}. Then $p$ satisfies the forward Kolmogorov or Fokker-Planck equation \cite{Risken}
 \be
 \label{eq:FP}
 \frac{\partial }{\partial t}p(z,t)=-\sum_{j=1}^N \frac{\partial }{\partial z_j}(A_j(z,t)p(z,t))+\frac12 \sum_{j,k=1}^N \frac{\partial^2 }{\partial z_j \partial z_k}(b_{jk}(z,t)p(z,t))
 \ee
 where $b_{jk}$ are elements of the diffusion matrix $BB^T$. We shall not use the associated backward Kolmogorov equation here that is defined via the adjoint of the right-hand side of \eqref{eq:FP} in the variables $(z_0,t_0)$. Via the forward and backward Kolmogorov equations we can use the theory of parabolic partial differential equations to understand the SDE \eqref{eq:SDE}. 
 \item \textit{Random Dynamical System:} Under suitable conditions any SDE generates a random dynamical system given by a skew-product flow
 \be
 \label{eq:gen_RDS}
 (\omega,z)\mapsto (\theta(t)\omega,\varphi(t,\omega)z)=:\Theta(t)(\omega,z)
 \ee   
 on $\Omega\times \R^N$; for details of this construction see \cite{ArnoldSDE,ArnoldRDS}. The key point of \eqref{eq:gen_RDS} is that it provides a convenient framework to analyze invariant measures. Let $\mathcal{B}$ denote the Borel $\sigma$-algebra on $\R^N$. Then a measure $\mu$ on $\Omega \times \R^N$ is an invariant measure on $(\Omega\times X,\mathcal{F}\times \mathcal{B})$ if $\Theta\mu=\mu$ and $\pi_\Omega\mu=\P$ where $\pi_\Omega$ is the projection onto $\Omega$.    
\end{itemize}

Detailed introductions to some aspects of stochastic dynamics can be found in \cite{ArnoldCrauel,Araujo,Namachchivaya}. As a first step we use the sample path approach \cite{BerglundGentz,BerglundGentz1} and point out what it provides in the context of critical transitions. Let $z=(x,y)\in\R^2$ and consider the fast-slow SDE
\be
\label{eq:plane_SDE}
\begin{array}{lcl}
dx_\tau&=& \frac1\epsilon f(x_\tau,y_\tau)d\tau+\frac{\sigma_f}{\sqrt\epsilon} dW_{\tau},\\
dy_\tau&=& g(x_\tau,y_\tau)d\tau+\sigma_g dW_{\tau},\\
\end{array}
\ee
where the noise level $(\sigma_f^2+\sigma_g^2)^{1/2}=\sigma=\sigma(\epsilon)$ is usually assumed to depend on $\epsilon$; the scaling of the fast equation $\sigma_f/\sqrt\epsilon$ has been chosen due to the scaling law for Brownian motion (recall: $W_{\lambda\tau}=\lambda^{1/2}W_\tau$ in distribution for $\lambda\geq0$, \cite{Durrett2}). To understand critical transitions we would like to distinguish the region of $y$-values close to the transition from those far away. Let us first analyze the situation away from a critical transition where the deterministic critical manifold $C$ is normally hyperbolic and attracting. The deterministic slow manifold is given by
\benn
C_\epsilon=\{(x,y)\in\R^2:x=h_\epsilon(y)\}
\eenn 
where $h_\epsilon(y)=h_0(y)+O(\epsilon)$ by Fenichel's Theorem.\\ 

\textit{Remark:} Here we follow Berglund and Gentz \cite{BerglundGentz} but point out that alternative approaches for fast-slow SDEs are considered in \cite{SchmalfussSchneider} using random dynamical systems and in \cite{KabanovPergamenshchikov} using moment estimates and asymptotics.\\ 

The first goal is an estimate on the concentration of solutions to \eqref{eq:plane_SDE} near the deterministic slow manifold. To identify a neighborhood containing most sample paths we define the process
\be
\label{eq:SDE_deviate}
\xi_\tau:=x_\tau-h_\epsilon(y_\tau).
\ee  
Observe that $\xi_\tau$ measures the deviation of the fast components from the deterministic slow manifold. Applying It\^{o}'s formula to \eqref{eq:SDE_deviate} gives:
\bea
\label{eq:SDE_deviate1}
d\xi_\tau &=& dx_\tau-(D_yh_\epsilon)(y_\tau)dy+O(\sigma_g^2)d\tau \\
&=& \frac{1}{\epsilon}\left[ f(h_\epsilon(y_\tau)+\xi_\tau,y_\tau)-\epsilon (D_yh_\epsilon)(y_\tau)g(h_\epsilon(y_\tau)+\xi_\tau,y_\tau)+O(\epsilon\sigma_g^2) \right]d\tau \nonumber\\
&& +\left[\frac{\sigma_f}{\sqrt\epsilon}-\sigma_g (D_yh_\epsilon)(y_\tau)\right]dW_\tau. \nonumber
\eea 
Considering the linear approximation of \eqref{eq:SDE_deviate1} in $\xi_\tau$, neglecting the higher-order It\^{o} term $O(\epsilon\sigma_g^2)$ and replacing $y_\tau$ by its deterministic version $y_\tau^{det}$ gives
\be
\label{eq:SDE_deviate_approx}
\begin{array}{lcl}
d\xi^0_\tau&=&\frac1\epsilon A_\epsilon(y^{det}_\tau)\xi^0_\tau d\tau+\left[\frac{\sigma_f}{\sqrt\epsilon}-\sigma_g (D_yh_\epsilon)(y^{det}_\tau)\right]dW_\tau,\\
dy^{det}_\tau&=& g(h_\epsilon(y^{det}_\tau),y^{det}_\tau)d\tau,\\
\end{array}
\ee
where $A_\epsilon$ is defined as
\benn
A_\epsilon(y)= (D_xf)(h_\epsilon(y),y)-\epsilon (D_yh_\epsilon)(y)(D_xg)(h_\epsilon(y),y).
\eenn
Then define $X_\tau:=\sigma_f^{-2}\text{Var}(\xi^0_\tau)$ which satisfies a fast-slow ODE \cite{BerglundGentz} given by
\bea
\label{eq:SDE_deviate_ODE}
\begin{array}{lcl}
\epsilon \dot{X}&=& 2A_\epsilon(y)X+1, \\
\dot{y}&=& g(h_\epsilon(y),y). \\
\end{array}
\eea
The slow manifold of \eqref{eq:SDE_deviate_ODE} is
\benn
C^X_\epsilon=\left\{(X,y)\in\R^2:x=H_\epsilon(y)=-\frac{1}{2A_\epsilon(y)}+O(\epsilon)\right\}.
\eenn
The neighborhood of $C_\epsilon$ is then defined as
\be
\label{eq:nbhd_Nr}
N(r;C_\epsilon):=\left\{(x,y)\in\R^2:\frac{(x-h_\epsilon(y))^2}{H_\epsilon(y)}<r^2\right\}.
\ee
Essentially this provides a strip around $C_\epsilon$ with width depending on the variance and the linearization of the SDE; see Figure \ref{fig:fig6} for an illustration. A detailed statement and proof of the next theorem can be found in \cite{BerglundGentz}.

\begin{thm}
\label{thm:BG1}
Sample paths starting on $C_\epsilon$ stay in $N(r;C_\epsilon)$ with high probability for times approximately given by $O\left(\epsilon e^{r^2/(2\sigma_f^2)}\right)$.
\end{thm}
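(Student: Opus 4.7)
The natural approach is to establish $N(r;C_\epsilon)$ as a high-probability confinement region for the deviation process $\xi_\tau = x_\tau - h_\epsilon(y_\tau)$ from \eqref{eq:SDE_deviate}, by comparing the full SDE \eqref{eq:SDE_deviate1} with its Gaussian linearization \eqref{eq:SDE_deviate_approx} and exploiting the fact that the attracting slow manifold of the variance equation \eqref{eq:SDE_deviate_ODE} is exactly the width profile $\sigma_f^2 H_\epsilon(y)$ appearing in \eqref{eq:nbhd_Nr}. The plan is to introduce the first-exit time
\benn
\tau^* := \inf\{\tau \geq 0 : (x_\tau, y_\tau) \notin N(r;C_\epsilon)\},
\eenn
decompose $\xi_\tau = \xi^0_\tau + R_\tau$ with $\xi^0_\tau$ the Gaussian solution of \eqref{eq:SDE_deviate_approx}, and bound $\P[\tau^* \leq T]$ by a sharp tail estimate for $\sup_\tau (\xi^0_\tau)^2/H_\epsilon(y^{det}_\tau)$ combined with a deterministic control of the remainder $R_\tau$ on the event $\{\tau^* > \tau\}$. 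The exit-time scaling then emerges from equating the resulting bound to a fixed small constant.

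For the linear Gaussian process I would use variation of constants with the principal matrix $U(\tau,s) = \exp\bigl(\epsilon^{-1}\int_s^\tau A_\epsilon(y^{det}_u)\,du\bigr)$, which contracts exponentially at rate $|A_\epsilon|/\epsilon$ because $C^a$ is attracting. The variance $\text{Var}(\xi^0_\tau)= \sigma_f^2 X_\tau$ solves \eqref{eq:SDE_deviate_ODE}, and Fenichel's Theorem applied to this auxiliary fast-slow system shows that $X_\tau$ tracks $H_\epsilon(y^{det}_\tau)$ up to $O(\epsilon)$ after an initial transient of length $O(\epsilon|\log\epsilon|)$. A pointwise Gaussian tail then yields $\P\bigl[(\xi^0_\tau)^2 \geq r^2 H_\epsilon(y^{det}_\tau)\bigr] \leq e^{-r^2/(2\sigma_f^2)}$. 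To upgrade to a supremum estimate, partition $[0,T]$ into roughly $T/(c\epsilon)$ subintervals of length $c\epsilon$ and apply an exponential (Bernstein-type) martingale inequality to the stochastic convolution on each window; the contraction built into $U(\tau,s)$ means the supremum on each window is comparable to a single endpoint value and contributes $e^{-r^2/(2\sigma_f^2)}(1+O(\epsilon))$. A union bound then gives
\benn
\P\Bigl[\sup_{\tau \in [0,T]} (\xi^0_\tau)^2/H_\epsilon(y^{det}_\tau) \geq r^2\Bigr] \leq \frac{CT}{\epsilon}\, e^{-r^2/(2\sigma_f^2)},
\eenn
which is small precisely when $T = O\bigl(\epsilon\, e^{r^2/(2\sigma_f^2)}\bigr)$.

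The transfer from $\xi^0_\tau$ back to the true $\xi_\tau$ is a bootstrap on $R_\tau$. The nonlinearity $f(h_\epsilon(y)+\xi, y) - f(h_\epsilon(y), y) - A_\epsilon(y)\xi$ is $O(\xi^2)$, the Itô correction contributes $O(\epsilon\sigma_g^2)$, and replacing $y^{det}_\tau$ by $y_\tau$ adds an $O(\sigma_g\sqrt\tau)$ drift. On $\{\tau<\tau^*\}$ the bound $\xi_u^2 \leq r^2 H_\epsilon(y_u)$ holds for all $u \leq \tau$; after convolution with $U(\tau,u)$ the fast exponential decay absorbs the $\epsilon^{-1}$ prefactor in \eqref{eq:SDE_deviate1}, so $|R_\tau|$ stays much smaller than the tube width $r\sqrt{H_\epsilon}$ provided $r \leq r_0/\sqrt\epsilon$ for a geometric constant $r_0$ independent of $\epsilon, \sigma_f, \sigma_g$.

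\textbf{Expected main obstacle.} The delicate point is obtaining the sup-norm estimate with the \emph{sharp} constant $1/(2\sigma_f^2)$ in the exponent and the prefactor $T/\epsilon$ rather than $T$ or $T/\epsilon^2$. A naive Doob or Burkholder inequality would either degrade the constant or inflate the prefactor; the correct bookkeeping requires using the fast contraction of $U(\tau,s)$ to reduce the supremum over $[0,T]$ to effectively $T/\epsilon$ nearly independent Gaussian maxima on windows of length $c\epsilon$. Closing the nonlinear bootstrap for $R_\tau$ while preserving this sharpness, and controlling the $O(\sigma_g)$ slow-variable deviation $y_\tau - y^{det}_\tau$ without enlarging the effective tube, are the remaining technical steps.
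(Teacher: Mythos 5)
The paper does not actually prove this theorem: it defers entirely to the detailed statement and proof in Berglund--Gentz \cite{BerglundGentz}, and the surrounding text of Section \ref{sec:stochastic_basic} (the deviation process $\xi_\tau$, the linearization \eqref{eq:SDE_deviate_approx}, the variance equation \eqref{eq:SDE_deviate_ODE} and the tube $N(r;C_\epsilon)$) is precisely the scaffolding of that reference's argument. Your sketch --- first-exit time, Gaussian linearization with Bernstein-type estimates on $O(\epsilon)$-windows giving the prefactor $T/\epsilon$ and the exponent $r^2/(2\sigma_f^2)$, then a nonlinear bootstrap on the remainder --- is a faithful outline of exactly that sample-path approach, so it is essentially the same route the paper relies on.
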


Theorem \ref{thm:BG1} is reminiscent of the classical Kramer's time to escape from a potential well \cite{FreidlinWentzell}. From the methodology we have just reviewed, we can obtain several important conclusions for critical transitions:

\begin{itemize}
 \item Normal hyperbolicity provides the separation criterion for sample paths into two major regimes. To develop an early-warning sign we expect to pass through a normally hyperbolic metastable regime before entering a region near the critical transition.
 \item Sample paths are likely to stay inside a neighborhood that scales with the variance. Hence if there is a critical transition due to the loss of normal hyperbolicity of a slow manifold we expect the variance to increase as we approach the transition; see also \cite{KuehnCT2}.
 \item Regarding the previous point, it is easily seen from the techniques in \cite{BerglundGentz} that this variance increase can be established rigorously for normal forms of bifurcations under suitable boundedness assumptions on the noise. In particular, there is no need to refer to heuristic arguments or autoregressive models as one can directly prove this result pathwise.
 \item The noise in the slow variable is of higher-order in the drift term of \eqref{eq:SDE_deviate1}. In the diffusion term we have noise contributions $\sigma_f/\sqrt\epsilon$ and $\sigma_g$ so that if $\sigma_f$ and $\sigma_g$ have the same asymptotic dependence on $\epsilon$ we can again neglect the slow variable noise; hence we shall only consider the case $\sigma_g=0$ from now on.
\end{itemize}

Motivated by the previous discussion and Fenichel's Normal Form Theorem \ref{thm:FNform} we are going to model paths away from a critical transition by the system 
\be
\label{eq:plane_SDE_FNF}
\begin{array}{lcl}
dx_\tau&=& \frac{\alpha}{\epsilon}(-x) d\tau+\frac{\sigma}{\sqrt\epsilon} dW_{\tau},\\
dy_\tau&=& 1d\tau,\\
\end{array}
\ee
which decouples with $y_\tau=y_0+\tau$ and where $0<\alpha=O(1)$. The fast equation of \eqref{eq:plane_SDE_FNF} is just the classical Ornstein-Uhlenbeck (OU) process \cite{Gardiner}. The solution starting at $\tau=0$ is
\be
\label{eq:OU_sol}
x_\tau=x_0e^{-\alpha \tau/\epsilon}+\frac{\sigma}{\sqrt\epsilon}\int_0^\tau e^{-\alpha(\tau-\rho)/\epsilon}dW_\rho.
\ee 
Since our initial condition is always assumed to be deterministic we get that $x_\tau$ is a Gaussian process with mean and variance given by
\beann
\E[x_\tau]&=& x_0e^{-\alpha\tau/\epsilon},\\
\text{Var}(x_\tau)&=&\left(x_0-\frac{\sigma^2}{2\alpha}\right)e^{-2\alpha\tau/\epsilon}+\frac{\sigma^2}{2\alpha}.
\eeann
The correlation is easily computed as
\benn
\E[x_\tau x_s]=\left(-\frac{\sigma^2}{2\alpha}\right)e^{-\alpha(\tau+s)/\epsilon}+\frac{\sigma^2}{2\alpha}e^{-\alpha|\tau-s|/\epsilon}.
\eenn
Observe that on a slow time scale $\tau$ of order $O(1)$ the terms involving $e^{-K\tau/\epsilon}$ are extremely small. In the limit $\tau\ra \I$
we have the stationary variance given by
\be
\label{eq:FNF_stat_var}
\lim_{\tau\ra \I}\text{Var}(x_\tau)=\frac{\sigma^2}{2\alpha}.
\ee
It is crucial to note that the variance is constant in the limit $\tau\ra \I$ but is already approximately constant up to exponentially small terms after a slow time $\tau=O(1)$. Therefore we expect that systems far away from critical transitions are characterized by a variance without a significant trend. The expectation and autocorrelation vanish in the limit $\tau\ra\I$ and also all other moments are constants.\\ 

\begin{figure}[htbp]
\psfrag{pN}{$\partial N$}
\psfrag{y}{$y$}
\psfrag{x}{$x$}
\psfrag{s2}{$\sigma^2$}
\psfrag{-s2}{$-\sigma^2$}
	\centering
		\includegraphics[width=0.95\textwidth]{./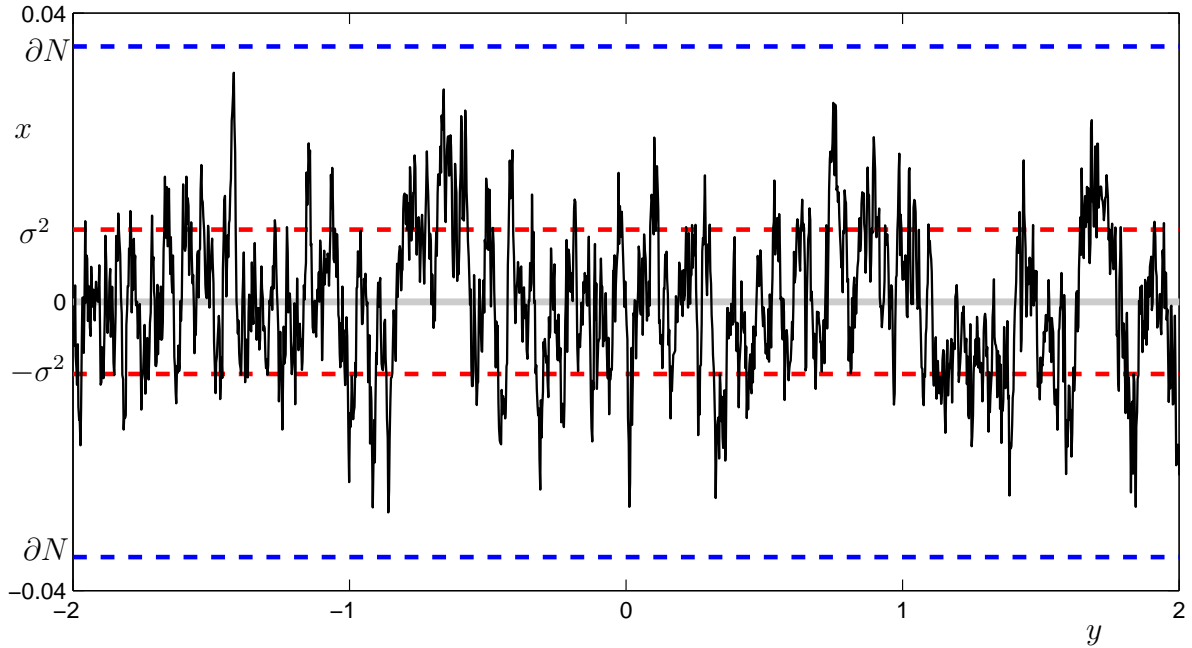}
	\caption{\label{fig:fig6}Simulation of \eqref{eq:plane_SDE_FNF} with $\epsilon=0.02$, $\sigma=0.1$ and $\alpha=1$. A sample path is shown (black) that stays inside the neighborhood $N(r;C_\epsilon)=N$ with boundaries indicated by $\partial N$ (dashed blue). We also plot a neighborhood defined by the variance $\sigma^2$ (dashed red) and the slow/critical manifold $C_\epsilon$ (gray).}
\end{figure}

Furthermore our normal form approach also suggests a way how to estimate the parameters from a single sample path. First we detrend the fast variable data in a sufficiently long normally hyperbolic phase. Using the model \eqref{eq:plane_SDE_FNF} for the fast dynamics gives
\be
\label{eq:MLE_eq}
dx_\tau= -\tilde{\alpha}x~ d\tau+\tilde{\sigma} ~dW_{\tau}
\ee
where $\tilde{\alpha}:=\alpha/\epsilon$ and $\tilde{\sigma}=\sigma/\sqrt\epsilon$. Well-known statistical techniques for parameter estimation \cite{Terrell} can then be applied to \eqref{eq:MLE_eq} to find $\tilde{\alpha}$ and $\tilde{\sigma}$ from the detrended data; for example, by using a maximum likelihood estimator or many other possible estimators \cite{Kutoyants}. This provides the correct order of magnitude for $\epsilon$ from $\tilde{\alpha}$ since $\alpha=O(1)$ by assumption. Then we can conclude the order of $\sigma$ from $\tilde{\sigma}$. This shows that the initial data far away from a critical transition can have crucial value for its prediction. Note however, that we have assumed that detrending transforms the system into Fenichel normal form. The following example shows the problems that can result in this context.

\begin{ex}
We have assumed that the fast-slow SDE \eqref{eq:plane_SDE_FNF} is already in Fenichel normal form. In general, we only have the equation for the deterministic critical manifold $C=\{(x,y)\in\R^2:f(x,y)=0\}$. We can describe $C$ as a graph $h:\R\ra \R$ so that 
\benn
C=\{(x,y)\in\R^2:x=h(y)\}
\eenn
where $f(h(y),y)=0$. Then the coordinate change $X=x-h(y)$ gives that in the new coordinates $C=\{X=0\}$. Let us consider the following example
\be
\label{eq:ex_count}
\begin{array}{lcl}
dx_t&=&(y-x)dt+\sigma dW_t,\\
dy_t&=&\epsilon g(x,y)dt.
\end{array}
\ee 
We set $(X_t,Y_t)=(x_t-y_t,y_t)$ which transforms \eqref{eq:ex_count} to
\benn
\begin{array}{lcl}
dX_t&=&-x-\epsilon g(X+Y,Y)dt+\sigma dW_t,\\
dY_t&=&\epsilon g(X+Y,Y)dt.
\end{array}
\eenn 
Then the variance of $x_t$ and $X_t$ are equal since
\benn
\text{Var}(X_t)=\text{Var}(x_t-y_t)=\text{Var}(x_t)+\text{Var}(y_t)-2\text{Cov}(x_t,y_t)=\text{Var}(x_t)
\eenn
since $y_t$ was assumed to be deterministic. In general, this cannot be assumed so that stochastic slow variables definitely will change the result; see also equation \eqref{eq:SDE_deviate1}.
\end{ex}

Hence we have identified the problem of coordinate transformation effects on critical transition indicators as a topic for future study. We are not going to consider this problem here but point out it arises immediately as a key problem once a mathematical framework for critical transitions is considered. Even without the parameter estimation problem in a normally hyperbolic regime away from the transition we must consider this problem; indeed, we might want to assume for theoretical analysis that systems are in normal form near the critical transition point.

\section{Stochastic Indicators}
\label{sec:indicators}

A natural question for finding indicators of critical transitions for SDEs is to ask what happens to the deterministic fold, Hopf, transcritical and pitchfork bifurcations under the influence of noise. This question already raises a few unanswered mathematical problems of stochastic bifurcation theory \cite{ArnoldSDE,ArnoldRPB}. We briefly review two viewpoints about what a ``stochastic bifurcation'' should be. Suppose we are given a family of random dynamical systems (RDS) $\{\Theta_y\}$ for a parameter $y\in \R$ associated to the SDE  \eqref{eq:SDE} or \eqref{eq:SDE_Strat}. Assume that $\{\mu_y\}$ is a family of invariant measures for the RDS which can be viewed as analogs for invariant sets in the deterministic case; for example, if the family of RDS has an equilibrium point at $z=0$ then $\mu_y=\delta_0$ is a natural example. We say $y=y_D$ is a dynamical or D-bifurcation point if in each neighborhood of $y_D$ there is a family of invariant measures $\nu_y$ such that $\nu_y\neq \mu_y$ and $\nu_y\ra \mu_y$ as $y\ra y_D$ in the topology of weak convergence. Basically this notion presented in \cite{ArnoldSDE} tries to capture the deterministic viewpoint of bifurcations in a stochastic context. Instead of ``qualitative changes'' for invariant measures one could also look at ``qualitative changes'' for densities associated to the SDE \eqref{eq:SDE}. Suppose $p^y_s(z)=p^y_s$ is a family of probability densities solving the stationary Fokker-Planck equation
\be
 \label{eq:FP_stat}
 0=-\sum_{j=1}^N \frac{\partial }{\partial z_j}(A_jp^y_s)+\frac12 \sum_{j,k=1}^N \frac{\partial^2 }{\partial z_j \partial z_k}(b_{jk}p^y_s).
\ee
It has been suggested to consider a qualitative change in the family of densities a $p^y_s$ a bifurcation point \cite{HorsthemkeLefever}. For example, if the density $p^y_s$ is unimodal for $y<y_P$ and bimodal for $y>y_P$ then $y=y_P$ is called a phenomenological or P-bifurcation point; substantial progress has been made to understand D- and P-bifurcations \cite{Baxendale1,Baxendale2} and associated problems of random attractors \cite{CrauelFlandoli,SchenkHoppe} and stochastic normal forms \cite{ArnoldSDE} but this theory has not yet been applied to detecting critical transitions.\\

We are going to consider an example by Arnold and Boxler \cite{ArnoldBoxler,ArnoldSDE} where explicit calculations for D- and P-bifurcations are possible; this will demonstrate that the stochastic bifurcation concepts can complement existing techniques to predict critical transitions. Consider the parametrized family of Stratonovich SDEs 
\be
\label{eq:ArnoldBoxler}
dx_t=(yx_t-x_t^2)dt+\sigma x_t\circ dW_t
\ee
representing one possible interpretation of a transcritical bifurcation with noise. Note that we could also make the parameter $y$ slowly varying; since we are working on the fast time scale $t$ this would amount to using the deterministic equation 
\benn
dy_t=\epsilon dt.
\eenn
However, the parametric analysis is already very complicated and we shall restrict to this situation here. The It\^{o} SDE associated to \eqref{eq:ArnoldBoxler} is
\be
\label{eq:ArnoldBoxlerSDE}
dx_t=\left(yx_t-x_t^2+\frac12\sigma^2 x_t\right)dt+\sigma x_t dW_t.
\ee 
Note that we are dealing with multiplicative noise with respect to the trivial solution $x_t\equiv 0$. An explicit formula \cite{ArnoldSDE} for the random dynamical system defined by \eqref{eq:ArnoldBoxler} is
\be
\label{eq:AB_formula}
\varphi_y(t,\omega)x=\frac{xe^{yt+\sigma W_t(\omega)}}{1+x\int_0^t e^{ys+\sigma W_s(\omega)}ds}.
\ee   
Ergodic invariant measures $\mu$ for RDS on $\R$ are always random Dirac measures i.e. of the form $\delta_{x_0(\omega)}$ \cite{ArnoldSDE}. From formula \eqref{eq:AB_formula} it follows that there are two families of ergodic invariant measures, one supported at $0$ given by $\mu^y_\omega=\delta_0$ and one family $\nu_\omega^y=\delta_{x^*_y(\omega)}$ supported on the random point that makes the denominator in \eqref{eq:AB_formula} zero as $t\ra \pm\I$:
\benn
x^*_y(\omega)=\left\{\begin{array}{ll} 
-\left(\int_0^\I e^{yt+\sigma W_t(\omega)}dt\right)^{-1} & \text{ for $y<0$,}\\
\left(\int^0_{-\I} e^{yt+\sigma W_t(\omega)}dt\right)^{-1} & \text{ for $y>0$.}\\
\end{array}\right.
\eenn
It is very important to note that for $y\neq 0$ the random dynamical system \eqref{eq:AB_formula} is only defined for $t>0$ on the random domain given by
\benn
D_y(t,\omega)=\left\{\begin{array}{ll} 
\left[x^*_y,\I\right) & \text{ for $y<0$,}\\
\left[0,\I\right) & \text{ for $y>0$.}\\
\end{array}
\right.
\eenn
One idea explored further in Sections \ref{sec:noiseinduced} and \ref{sec:moments} is to analyze the role of this random boundary and how it signals the explosion/critical transition of the process. Having explicit expressions for the ergodic invariant measures one can show the following bifurcation theorem \cite{ArnoldBoxler}.

\begin{thm}
The SDE \eqref{eq:ArnoldBoxler} modeling a transcritical bifurcation with multiplicative noise has a D-bifurcation at $y=0$.
\end{thm}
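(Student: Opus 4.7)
The plan is to verify the definition of D-bifurcation at $y_D=0$ directly, using the two families of ergodic invariant measures $\mu^y_\omega=\delta_0$ and $\nu^y_\omega=\delta_{x^*_y(\omega)}$ that were already exhibited. First I would check the non-degeneracy requirement: for every $y\neq 0$, the integrals defining $x^*_y(\omega)$ are almost surely finite and nonzero, so $x^*_y(\omega)\neq 0$ a.s., and hence $\nu^y\neq \mu^y$. Thus every neighborhood of $0$ contains parameters admitting a second family of ergodic invariant measures distinct from the trivial one.

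The substantive step is to establish the weak convergence $\nu^y\to \mu^y=\delta_0$ as $y\to 0$. Since both objects are random Dirac measures on $\mathbb{R}$, weak convergence of invariant measures for the RDS reduces, by testing against bounded continuous $f(\omega,x)$ and applying dominated convergence, to showing that $x^*_y(\omega)\to 0$ almost surely as $y\to 0$. In view of the explicit formula this is equivalent to
\[
\int_0^{\infty} e^{yt+\sigma W_t(\omega)}\,dt \longrightarrow \infty \quad (y\uparrow 0), \qquad \int_{-\infty}^{0} e^{yt+\sigma W_t(\omega)}\,dt \longrightarrow \infty \quad (y\downarrow 0),
\]
both a.s. I would obtain these by monotone convergence: the integrands increase as $|y|$ decreases, so the integrals rise monotonically to the limits $\int_0^\infty e^{\sigma W_t}\,dt$ and $\int_{-\infty}^0 e^{\sigma W_t}\,dt$ respectively. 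It then remains to observe that these limit integrals are a.s. infinite, which follows from recurrence of Brownian motion combined with the law of the iterated logarithm: the path $\sigma W_t$ exceeds any prescribed level on sets of positive Lebesgue measure as $t\to\pm\infty$, forcing the exponential integrals to diverge.

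The main obstacle is precisely this last analytic step. For each fixed $y\neq 0$ the integrals are pathwise finite because $yt$ dominates $\sigma W_t$ in the correct direction (by LIL), but there is no uniform integrable dominator allowing a direct $y\to 0$ limit inside the integral. The monotonicity in $y$ of the integrand is what saves the argument, after which the pathwise divergence of $\int e^{\sigma W_t}\,dt$ must be extracted from recurrence/LIL rather than from any moment bound. Combining this asymptotic with the non-degeneracy observation verifies Arnold's definition and yields the D-bifurcation at $y=0$.
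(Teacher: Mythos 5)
Your argument is correct, and it is worth noting that the paper itself does not prove this theorem at all: it simply cites Arnold and Boxler after exhibiting the two families of ergodic invariant measures $\mu^y_\omega=\delta_0$ and $\nu^y_\omega=\delta_{x^*_y(\omega)}$. Your proposal therefore supplies the verification that the paper delegates to the reference, and it follows the route one would expect (and essentially the one in Arnold--Boxler): check $\nu^y\neq\mu^y$ for $y\neq 0$ via a.s.\ finiteness and positivity of the integrals, then reduce weak convergence $\nu^y\to\delta_0$ to a.s.\ convergence $x^*_y(\omega)\to 0$, which you obtain by monotone convergence in $y$ (the integrand $e^{yt+\sigma W_t}$ is monotone in $y$ on $t\geq 0$ and on $t\leq 0$ separately, in the correct directions) together with the a.s.\ divergence of $\int_0^\infty e^{\sigma W_t}\,dt$ and $\int_{-\infty}^0 e^{\sigma W_t}\,dt$. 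One small imprecision: exceeding a fixed level ``on sets of positive Lebesgue measure'' is not by itself enough to force divergence of the exponential integral; what you need, and what neighborhood recurrence of one-dimensional Brownian motion actually gives, is that the occupation time $\mathrm{Leb}\{t\geq 0:\sigma W_t>c\}$ is a.s.\ infinite for every fixed $c$ (infinitely many upcrossing excursions, each contributing an i.i.d.\ positive duration), from which the divergence is immediate; the law of the iterated logarithm is not needed for this step, only for the finiteness of the integrals when $y\neq 0$. With that reading, your proof is complete and correctly verifies Arnold's definition of a D-bifurcation at $y_D=0$.
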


The D-bifurcation point provides us with an analog of the deterministic transcritical bifurcation point. We know that a critical transition is induced by a deterministic transcritical bifurcation at $y=0$. The stochastic formulation \eqref{eq:ArnoldBoxler} also provides us with additional information. Consider the stationary Fokker-Planck equation associated to \eqref{eq:ArnoldBoxler}-\eqref{eq:ArnoldBoxlerSDE}
\be
\label{eq:FP_AB}
0=-\frac{d}{dx}\left(\left([y+\frac{\sigma^2}{2}]x-x^2\right)p^y_s(x)\right)+\frac{d^2}{dx^2}\left(\frac{\sigma^2x^2}{2}p^y_s(x)\right)
\ee 
where $p^y_s(x)$ denotes the stationary probability density of $p^y(x,t)$. One normalizable solution of \eqref{eq:FP_AB} for $y>0$ is given by
\be
\label{eq:FP_sol1}
p^y_s(x)=\frac{1}{N_y}x^{\frac{2y}{\sigma^2}-1}e^{-\frac{2x}{\sigma^2}}
\ee
for $x>0$ and $p^y_s(x)=0$ for $x\leq 0$; here $N_y$ is a computable normalization constant \cite{ArnoldSDE}. From \eqref{eq:FP_sol1} we see that the density has a singularity at $x=0$ for $y\in\left(0,\sigma^2/2\right)$ and is unimodal for $y>\sigma^2/2$. Hence there is a P-bifurcation at $y_P=\sigma^2/2$; see also \cite{Zeeman1,Zeeman2} to make the non-equivalence of the two densities precise. We can either use the backward Kolmogorov equation or a symmetry argument to obtain another P-bifurcation at $y=-\sigma^2/2$ giving the final bifurcation diagram shown in Figure \ref{fig:fig7}.\\

\begin{figure}[htbp]
\psfrag{P-}{$y_{P^-}$}
\psfrag{P+}{$y_{P^+}$}
\psfrag{y}{$y$}
\psfrag{x}{$x$}
\psfrag{D}{$y_D$}
	\centering
		\includegraphics[width=0.95\textwidth]{./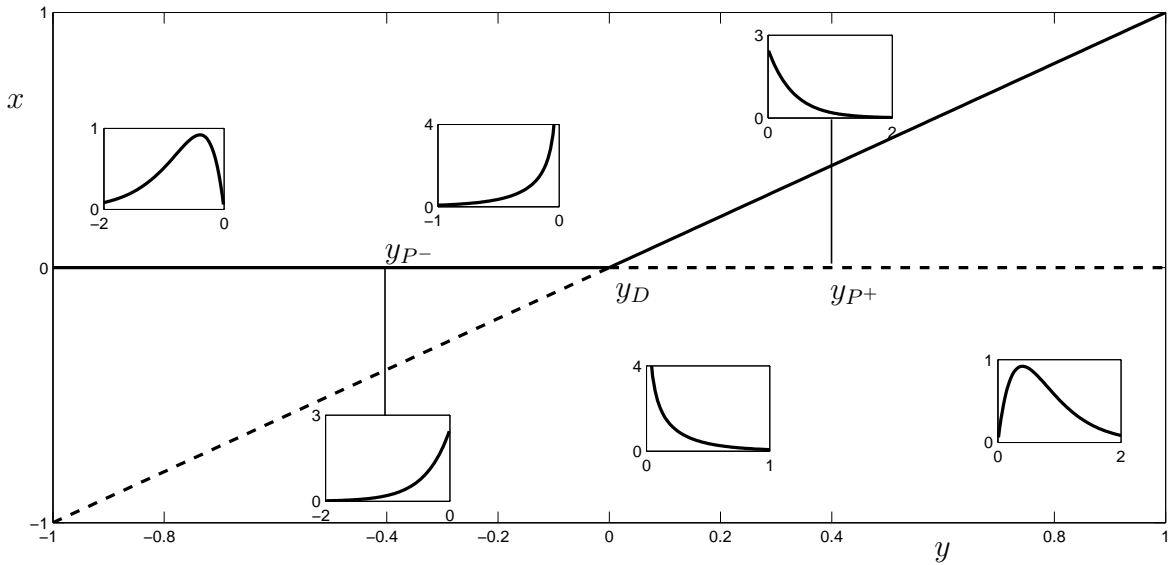}
	\caption{\label{fig:fig7}Bifurcation diagram for the Arnold and Boxler example \eqref{eq:ArnoldBoxler} with $\sigma=\sqrt{0.8}$. There are two P-bifurcations at $y_{P^\pm}=\pm\sigma^2/2=\pm0.4$ and a D-bifurcation at $y_D=0$.  The stationary densities are plotted at the values $y=\pm 0.8$ and $y=\pm0.2$ to show the qualitative change for the P-bifurcation. The deterministic transcritical bifurcation diagram is drawn for orientation purposes.}
\end{figure}

It is very interesting to calculate some of the moments of $p^y_s(x)$ explicitly; we fix $y>0$ and consider \eqref{eq:FP_sol1}. For the mean $m^s(y)$ we find
\benn
m_s^y=4^{-\frac{y}{\sigma ^2}} y \left(\frac{1}{\sigma ^2}\right)^{-\frac{2 y}{\sigma ^2}} \Gamma\left(\frac{2 y}{\sigma ^2}\right).
\eenn
The variance $v_s^y$ is
\be
\label{eq:var_AB}
v_s^y=y^2+\frac{y \sigma ^2}{2}-4^{-\frac{y}{\sigma ^2}} y \left(\frac{1}{\sigma ^2}\right)^{-\frac{2 y}{\sigma ^2}} \sigma ^2 \Gamma\left(1+\frac{2 y}{\sigma ^2}\right)+2^{-\frac{4 y}{\sigma ^2}} y^2 \left(\frac{1}{\sigma ^2}\right)^{-\frac{4 y}{\sigma ^2}} \Gamma\left(\frac{2 y}{\sigma ^2}\right)^2.
\ee

\begin{figure}[htbp]
\psfrag{V}{$v_s^y$}
\psfrag{yP}{$y_{P^+}=\frac{\sigma^2}{2}$}
\psfrag{y}{$y$}
	\centering
		\includegraphics[width=0.7\textwidth]{./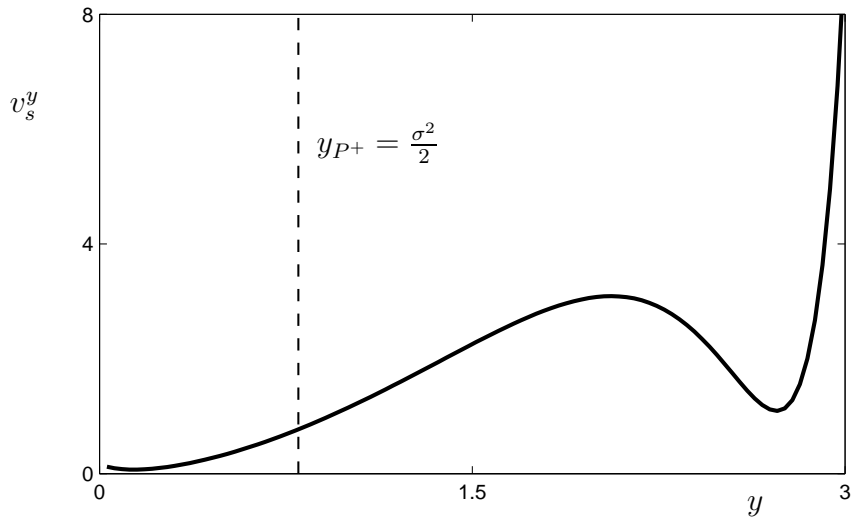}
	\caption{\label{fig:fig8}Parameter-dependent variance $v_s^y$ for the Arnold and Boxler example \eqref{eq:ArnoldBoxler}. The formula is given in equation \eqref{eq:var_AB}.}
\end{figure}

A direct plot in Figure \ref{fig:fig8} shows that the variance is non-monotone for sufficiently small noise $\sigma$. In particular, there is a local minimum and a local maximum for $y>y_P$. By symmetry this situation also holds for $y<y_P$. There are several observations that we can conclude from the previous discussion regarding critical transitions:

\begin{itemize}
 \item There is a P-bifurcation preceding a D-bifurcation for the transcritical bifurcation occurring in \eqref{eq:ArnoldBoxler}. In particular, the P-bifurcation point can potentially be used as an estimator/predictor for the critical transition point.
 \item The D-bifurcation point could be used to form the ``organizing center'' for the critical transition in analogy to the bifurcation point in the deterministic case i.e. it provides us with a rigorous definition of a reference point where the jumps occur.
 \item The unstable deterministic equilibrium branches naturally appear as boundary points for the stationary Fokker-Planck equation.
 \item The variance, and also other moments, can vary rapidly and non-monotonically near a critical transition point; cf. the situation in Section \ref{sec:moments}.
 \item For the non-stationary case, the boundaries for the dynamical system have to be random since there is always a positive probability that a sample path reaches any positive or negative $x$-value. We shall discuss this problem in Section \ref{sec:noiseinduced}.
\end{itemize}

We remark that the example by Arnold and Boxler is rather special since we were able to find explicit solutions for all interesting quantities. In many cases we would have to rely more on numerical methods; see, for example, \cite{MilsteinTretyakov,KloedenPlaten}. Furthermore, it has been shown that D- and P-bifurcations do not always have to appear together and that the situation for Hopf bifurcation is much more complicated than anticipated \cite{ArnoldRPB}. However, examples with multiplicative noise are expected to appear naturally in many control problems since approaching an instability  also might want to reduce the noise level. In this scenario it is easy to understand that for multiplicative noise a rising variance early-warning sign can fail \cite{KuehnCT2}. Therefore we suggest that P-bifurcation indicators should definitely be added to the toolbox of possible early-warning signs.

\section{Noise-Induced Transitions}
\label{sec:noiseinduced}

The term ``noise-induced transitions'' groups together a rather wide spectrum of phenomena; other terms that are related to it are stochastic resonance, coherence resonance, self-induced stochastic resonance \cite{Lindneretal}. The different concepts share a common feature: the noise induces dynamical behavior in a system that cannot be found in the deterministic version. To illustrate the situation consider the following planar fast-slow SDE
\be
\label{eq:fold_nit}
\begin{array}{lcl}
dx_\tau&=& \frac1\epsilon(y-x^2) d\tau+\frac{\sigma}{\sqrt\epsilon} dW_{\tau},\\
dy_\tau&=& g(x_\tau,y_\tau)d\tau,\\
\end{array}
\ee
modeling the fold critical transition. If we consider \eqref{eq:fold_nit} on the fast time scale $t=\tau/\epsilon$ and then consider the singular limit $\epsilon\ra 0$ we get
\be
\label{eq:WF_ex}
dx_t=(y-x^2)dt+\sigma dW_t.
\ee
Fixing some $y>0$ a sample path starting for some $x\approx \sqrt{y}$ is expected to stay with high probability near the stable equilibrium of the deterministic system at $x=\sqrt{y}$ if $\sigma$ is sufficiently small; see Theorem \ref{thm:BG1}. The problem is that it can escape from a neighborhood of \eqref{eq:WF_ex} eventually with some probability i.e. there is a large deviation. Classical theory of large deviations \cite{FreidlinWentzell} predicts how likely it is to escape from an attracting equilibrium. The deterministic version of \eqref{eq:WF_ex} is a gradient system with potential
\benn
U(x)=-yx+\frac13x^3.
\eenn 
The potential difference to go from the stable equilibrium $x=\sqrt{y}$ past the unstable equilibrium at $x=-\sqrt{y}$ is
\benn
H:=U(\sqrt{y})-U(-\sqrt{y})=\frac43 y^{3/2}.
\eenn   
Then it is a classical result in large deviations \cite{FreidlinWentzell,BerglundGentz} that it takes a time $t=O(e^{2H/\sigma^2})$ for an excursion past the unstable equilibrium to occur. If $y=O(1)$ and $0<\sigma\ll 1$ then these excursions are extremely rare and one expects that the fast-slow system \eqref{eq:fold_nit} behaves deterministically and that Theorem \eqref{thm:foldKS} applies to analyze the critical transition. The key point for this line of reasoning is that we have assumed that
\be
\label{eq:small_fs_noise}
0<\sigma\ll \sqrt{\epsilon}\ll1
\ee     
for equation \eqref{eq:WF_ex} i.e. that the noise is small with respect to the time scale separation. In fact, one can show that excursions are very likely if the roles in \eqref{eq:small_fs_noise} are reversed \cite{BerglundGentz}.

\begin{thm}
\label{thm:BG2}
Consider the SDE \eqref{eq:fold_nit} and suppose $g\equiv 1$. If $\sigma\ll \sqrt{\epsilon}$ then critical transitions before the deterministic fold bifurcation point occur with very small probability. For $\sigma\gg \sqrt{\epsilon}$ critical transitions before the deterministic fold bifurcation occur with very high probability.
\end{thm}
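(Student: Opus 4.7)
My plan is to reduce the statement to a comparison of two length scales via the standard blow-up of the fold and to couple it with the concentration estimate of Theorem~\ref{thm:BG1}.

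First I would work on the attracting sheet $C^a$ away from the fold. By Theorem~\ref{thm:fenichel1} a slow manifold $C^a_\epsilon$ persists there; linearising the fast drift around it, as in Example~\ref{ex:fold}, gives a contraction rate of the form $A_\epsilon(y)=-2\sqrt{|y|}+O(\epsilon)$. The Ornstein--Uhlenbeck reduction leading to \eqref{eq:SDE_deviate_approx}--\eqref{eq:FNF_stat_var} then yields a Gaussian envelope whose typical half-width is $\sigma/|y|^{1/4}$. Applying Theorem~\ref{thm:BG1}, a sample path stays inside a tube of half-width $r$ around $C^a_\epsilon$ with probability at least $1-C\exp(-r^2/(2\sigma^2))$ for slow times of order $\epsilon e^{r^2/(2\sigma^2)}$. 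Choosing $r$ a little smaller than the distance $2\sqrt{|y|}$ to the repelling branch identifies the crossover $|y|\sim\sigma^{4/3}$ at which the tube just fills the deterministic basin.

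Next I would blow up the fold using the classical rescaling $x=\epsilon^{1/3}\tilde{x}$, $y=\epsilon^{2/3}\tilde{y}$, $\tau=\epsilon^{2/3}\tilde{t}$. Together with the Brownian scaling $W_{\lambda s}\stackrel{d}{=}\sqrt{\lambda}W_s$, It\^o's formula turns \eqref{eq:fold_nit} into an $\epsilon$-independent SDE near a Riccati-type fold with effective noise amplitude $\tilde{\sigma}=\sigma/\sqrt{\epsilon}$. Hence the dichotomy $\sigma\ll\sqrt{\epsilon}$ versus $\sigma\gg\sqrt{\epsilon}$ becomes $\tilde{\sigma}\ll 1$ versus $\tilde{\sigma}\gg 1$. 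For $\tilde{\sigma}\ll 1$, the distinguished deterministic solution singled out by Theorem~\ref{thm:foldKS} (the one that crosses $\Delta^{out}$ at blown-up height of order one, i.e.\ at displacement $O(\epsilon^{2/3})$ past the fold in the original coordinates) is stable under noise of strength $\tilde{\sigma}$, and trajectories concentrate around it with probability $1-O(e^{-c/\tilde{\sigma}^2})$. For $\tilde{\sigma}\gg 1$ diffusion dominates drift on the inner $O(1)$ scale in $\tilde{y}$, and a direct Gaussian lower bound on the density of $\tilde{x}$ at the unstable branch forces exit from the basin before $\tilde{y}$ reaches zero with high probability.

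The main obstacle is the matching between the outer linearised region $|y|\gg\epsilon^{2/3}$ and the inner fold region $|y|\lesssim\epsilon^{2/3}$. In the outer region the contraction $|A_\epsilon(y)|\to 0$ degenerates as one approaches the fold, so Theorem~\ref{thm:BG1} has to be applied on a $y$-dependent tube whose half-width must remain $o(\sqrt{|y|})$; technically this requires tracking the ratio $r(y)/\sqrt{|y|}$ along the slow flow and terminating the linear estimate precisely at the boundary $|y|\sim\epsilon^{2/3}$ of the blow-up chart. For $\sigma\ll\sqrt{\epsilon}$ the matching succeeds because $\sigma/|y|^{1/4}\ll\sqrt{|y|}$ throughout $|y|\geq\epsilon^{2/3}$, so the path can be handed over to the inner small-noise analysis and no transition occurs before the deterministic fold. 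For $\sigma\gg\sqrt{\epsilon}$ the outer tube reaches the repelling branch already at $|y|\sim\sigma^{4/3}\gg\epsilon^{2/3}$, and the inner analysis is not even needed to conclude an early exit. Combining the outer concentration with the appropriate inner analysis then yields the two halves of the theorem.
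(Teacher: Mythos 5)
You should first be aware that the paper does not actually prove Theorem \ref{thm:BG2}: the result is quoted from Berglund and Gentz, with the remark that the detailed estimates and the derivation of the scaling law are to be found in \cite{BerglundGentz}. What the paper does offer, immediately before the statement, is a quasi-static heuristic: freeze $y>0$ in the fast subsystem \eqref{eq:WF_ex}, view it as a gradient system with potential $U(x)=-yx+\frac13x^3$ and barrier $H=\frac43 y^{3/2}$ between the stable and unstable branches, and invoke the Kramers/large-deviation escape time $t=O(e^{2H/\sigma^2})$ to argue that early escapes are negligible when the noise is small relative to the time scale separation \eqref{eq:small_fs_noise}. Your proposal takes a genuinely different route --- and one much closer to the actual proof in \cite{BerglundGentz}: concentration in the $y$-dependent neighborhood $N(r;C_\epsilon)$ of Theorem \ref{thm:BG1} in the outer region, where the linearization $A_\epsilon(y)\sim -2\sqrt{|y|}$ degenerates, combined with the $x\sim\epsilon^{1/3}$, $y\sim\epsilon^{2/3}$, $\tau\sim\epsilon^{2/3}$ blow-up of the fold, which correctly converts the dichotomy $\sigma\lessgtr\sqrt\epsilon$ into $\tilde{\sigma}\lessgtr 1$ for an $\epsilon$-independent Riccati-type SDE; your identification of the crossover $|y|\sim\sigma^{4/3}$, at which the diffusive spreading $\sigma/|y|^{1/4}$ fills the basin of width $\sqrt{|y|}$, is exactly the scaling that underlies the theorem and explains the threshold rather than merely asserting it. The one place where your sketch is thinner than the cited proof is the strong-noise direction: a Gaussian lower bound on the density at the unstable branch shows that paths \emph{reach} that branch, but to conclude a transition with very high probability one must also show that, having reached it, paths leave for good; in \cite{BerglundGentz} this is done by partitioning the time axis into intervals on each of which the escape probability is bounded below and iterating via the Markov property. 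Modulo that standard but nontrivial step, your outline is sound and is considerably more informative than the heuristic the paper actually presents.
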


The detailed estimates and the derivation of the scaling law can be found in \cite{BerglundGentz}. Theorem \ref{thm:BG2} confirms our intuition that noise larger than the time scale separation can make the system jump away from an attracting critical manifold and that a fast-slow system with very small noise should closely resemble the deterministic situation. We also say that  
\benn
\sigma\approx \sqrt{\epsilon}
\eenn 
marks the intermediate regime. Similar results should also hold for transcritical and pitchfork bifurcations but with a different scaling law. The situation is less studied but the results in \cite{BerglundGentz} indicate that 
\be
\label{eq:BG3}
\sigma\approx \epsilon^{3/4}
\ee
is the intermediate regimes for the transcritical and pitchfork bifurcations. An additional problem arises when the slow variables representing the parameters have non-trivial slow dynamics. Consider the following stochastic van der Pol equation (see also \cite{Lindneretal}):
\be
\label{eq:SDE_vdP}
\begin{array}{lcl}
dx_\tau &=& \frac1\epsilon \left(y_\tau-\frac{x_\tau^3}{3}+x_\tau\right)d\tau+\frac{\sigma}{\sqrt\epsilon} dW_\tau,\\
dy_\tau &=& (a-x_\tau)d\tau.\\
\end{array}
\ee
For $a>1$ the deterministic equation has a unique globally stable equilibrium at $x=a$. The deterministic critical manifold is 
\benn
C=\left\{(x,y)\in\R^2:y=\frac{x^3}{3}-x\right\}.
\eenn
It is normally hyperbolic away from the two fold points $x=\pm 1$ and naturally splits into three parts
\benn
C^{a,-}=C\cap \{x<-1\},\qquad C^r=C\cap \{-1<x<1\},\qquad C^{a,+}=\{x>1\}
\eenn
where $C^{a,\pm}$ are attracting and $C^r$ is repelling. In Figure \ref{fig:fig1} a direct numerical simulation using the Euler-Maruyama method for SDEs \cite{Higham} is shown.\\

\begin{figure}[htbp]
\psfrag{x}{$x$}
\psfrag{y}{$y$}
	\centering
		\includegraphics[width=0.8\textwidth]{./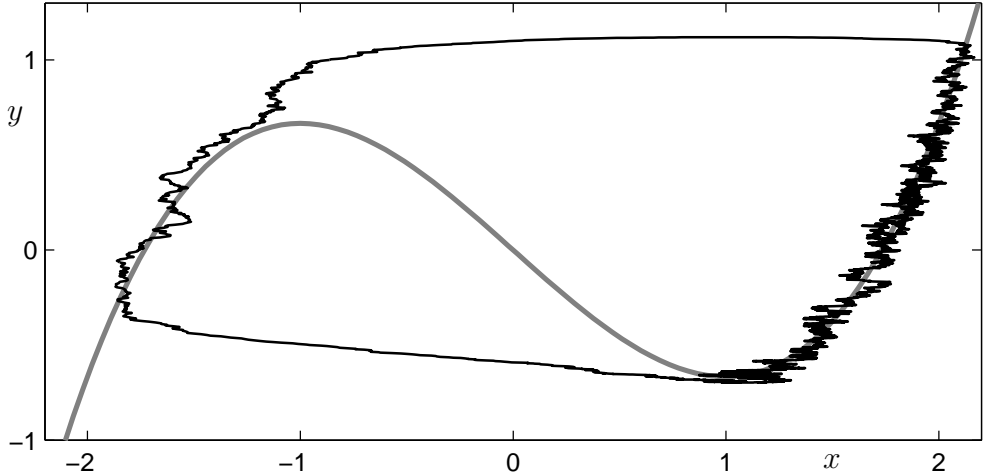}
	\caption{\label{fig:fig1}Single sample path (black) for equation \eqref{eq:SDE_vdP} with parameter values $(\epsilon,a,\sigma)=(0.05,1.05,0.1)$. The critical manifold $C$ (grey) also shown. The path was started at $(x(0),y(0))=(2,2/3)$ and has been stopped at $\tau=2400$.}	
\end{figure}

Observe from Figure \ref{fig:fig1} that the sample path is not even close to the deterministic solution which converges to the deterministic equilibrium at $x=1.05$. A noise induced transition has occurred near the deterministic equilibrium point close to the fold point at $x=1$. This transition induced a sample path that resembles classical relaxation oscillations; for an asymptotic analysis of scaling laws in the double limit $(\epsilon,\sigma)\ra(0,0)$ we refer to \cite{MuratovVanden-Eijnden,MuratovVanden-EijndenE}. From the discussion in this section we can conclude the following for critical transitions:

\begin{itemize}
 \item Critical transitions are expected to occur before reaching the neighborhood of a deterministic bifurcation point if the noise level is larger than the time scale separation.
 \item If the noise is small compared to the time scale separation (e.g. $\sigma\ll \sqrt\epsilon$ in the fold transition) we expect the deterministic bifurcation point to be a good prediction for the location of the critical transition.
 \item Scaling laws between noise and time scale separation will play a crucial role whether critical transitions are predictable at all and what phenomena can occur as we approach a transition \cite{BerglundGentzKuehn}.
 \item A slow variable/parameter with non-trivial dynamics can cause very complicated noise-induced transitions if $g$ is not bounded away from zero near the bifurcation point. The situation is even more complicated once multiple slow variables are considered \cite{KuehnCT2,BerglundGentzKuehn}.
\end{itemize}   

\section{Variance I: Analysis}
\label{sec:moments}

In this section we calculate the variance before a critical transition for several bifurcations in the singular limit. We consider the fast-slow SDE
\be
\label{eq:fold_simple_calc}
\begin{array}{lcl}
dx_t&=& f(x_t,y_t) dt+\sigma dW_t,\\
dy_t&=& \epsilon dt,\\
\end{array}
\ee
for $(x,y)\in\R^2$ and $\sigma>0$ is constant. The function $f(x,y)$ will be the vector field for the normal forms of the fold, transcritical and pitchfork bifurcations. Since we are only interested in the moments before the transition, we consider the normal forms only for $y<0$ as given in Section \ref{sec:det1}. In the singular limit $\epsilon\ra0$, the fast subsystem is one-dimensional with transition probabilities $p^y(x,t)=p^y(x,t|x_0,t_0)$ satisfying the Fokker-Planck equation
\be
\label{eq:FP_simple1}
\frac{\partial }{\partial t}p^y(x,t)=-\frac{\partial}{\partial x}(f(x,y)p^y(x,t))+\frac{\sigma^2}{2}\frac{\partial^2}{\partial x^2}p^y(x,t)
\ee
posed on some interval $(a,b)\subset \R$ with initial condition $p^y(x,t_0|x_0,t_0)=\delta(x-x_0)$. The probability current $J$ is defined by
\benn
J(x,t)=f(x,t)p^y(x,t)-\frac{\sigma^2}{2}\frac{\partial}{\partial x}p^y(x,t).
\eenn
Let us assume that there is a stationary distribution $p^y_s=p^y_s(x)$ for the process then \eqref{eq:FP_simple1} reduces to
\be
\label{eq:FP_simple2}
\frac{\partial}{\partial x}(f(x,y)p^y_s(x))-\frac{\sigma^2}{2}\frac{\partial^2}{\partial x^2}p^y_s(x)=0
\ee
which means that $J=J(x)$ satisfies $J'(x)=0$ and hence $J(x)=\text{constant}$; if we add reflecting boundary conditions then $J=0$ and it follows that 
\be
\label{eq:FP_simple3}
f(x,y)p^y_s(x)-\frac{\sigma^2}{2}\frac{\partial}{\partial x}p^y_s(x)=0.
\ee
The last equation can be integrated directly to give the classical potential solution
\benn
p^y_s(x)=\frac{1}{\mathcal{N}}\exp\left(2\int_a^x \frac{f(w,y)}{\sigma^2}dw\right)
\eenn
where $\mathcal{N}$ is the normalization constant for the probability distribution $\mathcal{N}=\int_a^b p^y_s(x)ds$. For each of the normal forms we choose the boundary points as follows:
\be
\label{eq:reflec_boundaries}
\begin{array}{lll}
\text{fold} & f=f_1(x,y)=-y-x^2 & (a,b)=(-\sqrt{-y},\I), \\
\text{transcritical }\quad & f=f_2(x,y)=yx-x^2 \qquad & (a,b)=(y,\I),\\
\text{pitchfork} & f=f_3(x,y)=yx+x^3 & (a,b)=(-\sqrt{-y},\sqrt{-y}).\\
\end{array}
\ee

The choices are motivated by two factors. In Section \ref{sec:indicators} we observed that the random dynamical system induced by the SDE \eqref{eq:fold_simple_calc} is described by limiting its domain to points which do not escape. We eliminate the random boundaries and consider the unstable equilibria (i.e. the repelling parts of the critical manifold) as boundaries. Furthermore, our choice of reflecting boundaries enforces the condition that transitions only occur after the deterministic critical transition. We get the following stationary densities
\be
\label{eq:res_dist}
\begin{array}{ll}
\text{fold} & p^y_{s,1}(x)=\frac{1}{\mathcal{N}_1}\exp\left(\frac{2}{\sigma^2}\left[ -yx-\frac13x^3+\frac23(-y)^{3/2}\right]\right), \\
\text{transcritical }\quad & p^y_{s,2}(x)=\frac{1}{\mathcal{N}_2}\exp\left(\frac{2}{\sigma^2} \left[ \frac12 yx^2-\frac13x^3-\frac16 y^3\right]\right), \\
\text{pitchfork} & p^y_{s,3}(x)=\frac{1}{\mathcal{N}_3}\exp\left(\frac{2}{\sigma^2}\left[ \frac12 yx^2+\frac14 x^4+\frac14 y^2 \right]\right). \\
\end{array}
\ee
By comparing \eqref{eq:res_dist} to the Gaussian density of \eqref{eq:OU_sol}, we observe a transition from symmetric to asymmetric behavior for the fold and transcritical transitions. However, the density for the pitchfork transition is still $\Z_2$-symmetric with respect to $x\mapsto -x$. Furthermore there are no P-bifurcations for any $p^y_{s,j}(x)$ for $y<0$ and $j=1,2,3$. This shows that symmetry-breaking and P-bifurcations are not necessarily early-warning signs of critical transitions.\\

\begin{figure}[htbp]
\psfrag{Var}{$Var$}
\psfrag{y}{$y$}
\psfrag{pitchfork}{pitchfork}
\psfrag{transcritical}{transcritical}
\psfrag{fold}{fold}
	\centering
		\includegraphics[width=0.85\textwidth]{./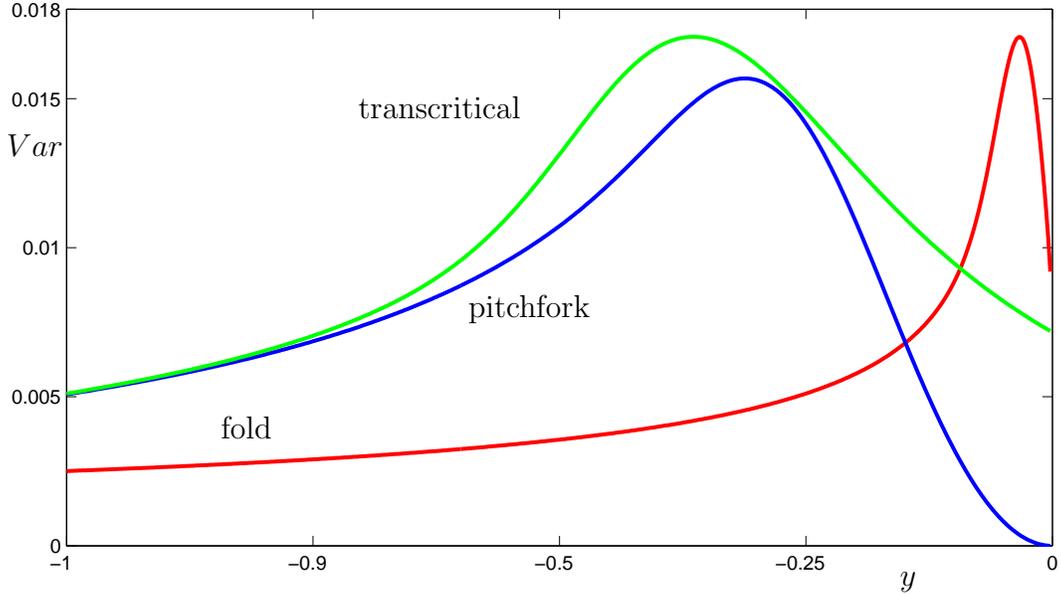}
	\caption{\label{fig:fig2}Variances $Var$ for \eqref{eq:res_dist} depending on the parameter $y$ with $\epsilon=0$; transcritical (green), pitchfork (blue) and fold (red)  transitions are considered. Starting from $y\ll -1$ the variance is almost constant, then we see that for all three cases there is a clearly visible rapid increase in the variance as the deterministic critical transition is approached. However, due to the reflecting boundary conditions we have chosen for the singular limit Fokker-Planck equation, the variance decreases again near $y=0$.}	
\end{figure} 

Figure \ref{fig:fig2} shows the variance of each distribution as a function of the parameter $y$ for a given fixed noise $\sigma=0.1$. Starting the parameter from $y\ll -1$ and increasing it, we see that for all critical transitions there is a rapid increase in the variance as the deterministic critical transition is approached. This confirms the observations and predictions from Section \ref{sec:intro} for our normal form SDE models. However, we also observe that there are local maxima for each curve as we increase $y$ further. The local maxima are caused by our modeling approach using the reflecting boundaries; the density becomes more and more confined near the stable critical manifold as we approach $y=0$. Note that this does not contradict results using a sample paths approach as for sample paths the scaling of the variance is calculated without boundaries at unstable equilibrium points and for $\epsilon>0$. Another interesting conjecture from Figure \ref{fig:fig2} is that the additional local maxima that we have obtained using reflecting boundaries can be viewed as the locations where a linearized approximation fails. More precisely, when $y\ll -1$ then we are in a normally hyperbolic regime and linearization and results about OU-process are applicable. When we get closer to the critical transition, nonlinear effects and noise-induced phenomena have to be taken into account. Furthermore it is easily calculated from the formulas \eqref{eq:res_dist} that the local maxima of the variance move closer to the critical transition if we decrease the noise level. This shows that by choosing boundary conditions for the Fokker-Planck equation we not only guarantee the existence of a normalizable density in the singular limit but also obtain additional information about critical transitions by identifying an easily-to-calculate indicator (``the local maximum'') beyond which linearized theory definitely fails. This shows that a dynamic sample paths viewpoint $(\epsilon>0)$ and a singular limit (or quasi-static, $\epsilon=0$) approach to critical transitions can nicely complement each other.\\ 

From Figure \ref{fig:fig2} we can also conclude that the variance curves for the transcritical/pitchfork transition are substantially different from the fold transition. Since the two cases also have different recovery exponents for slowing down (see Proposition \ref{eq:prop_recovery}) it should be possible to distinguish between them using early warning signs. 
   
\section{Variance II: Numerical Simulation}
\label{sec:simulation}

To relate our results in Section \ref{sec:moments} more directly to techniques used in applications we consider numerical simulation of sample paths \cite{Higham,KloedenPlaten,MilsteinTretyakov}. As a first question we address what happens to the variance for $0<\epsilon\ll 1$ in comparison to the singular limit calculation Fokker-Planck calculation.\\ 

\begin{figure}[htbp]
\psfrag{a}{(a1)}
\psfrag{a1}{(a2)}
\psfrag{b}{(b1)}
\psfrag{b1}{(b2)}
\psfrag{c}{(c1)}
\psfrag{c1}{(c2)}
\psfrag{Var}{$Var$}
\psfrag{y}{$y$}
\psfrag{escape}{\% esc.}
	\centering
		\includegraphics[width=0.9\textwidth]{./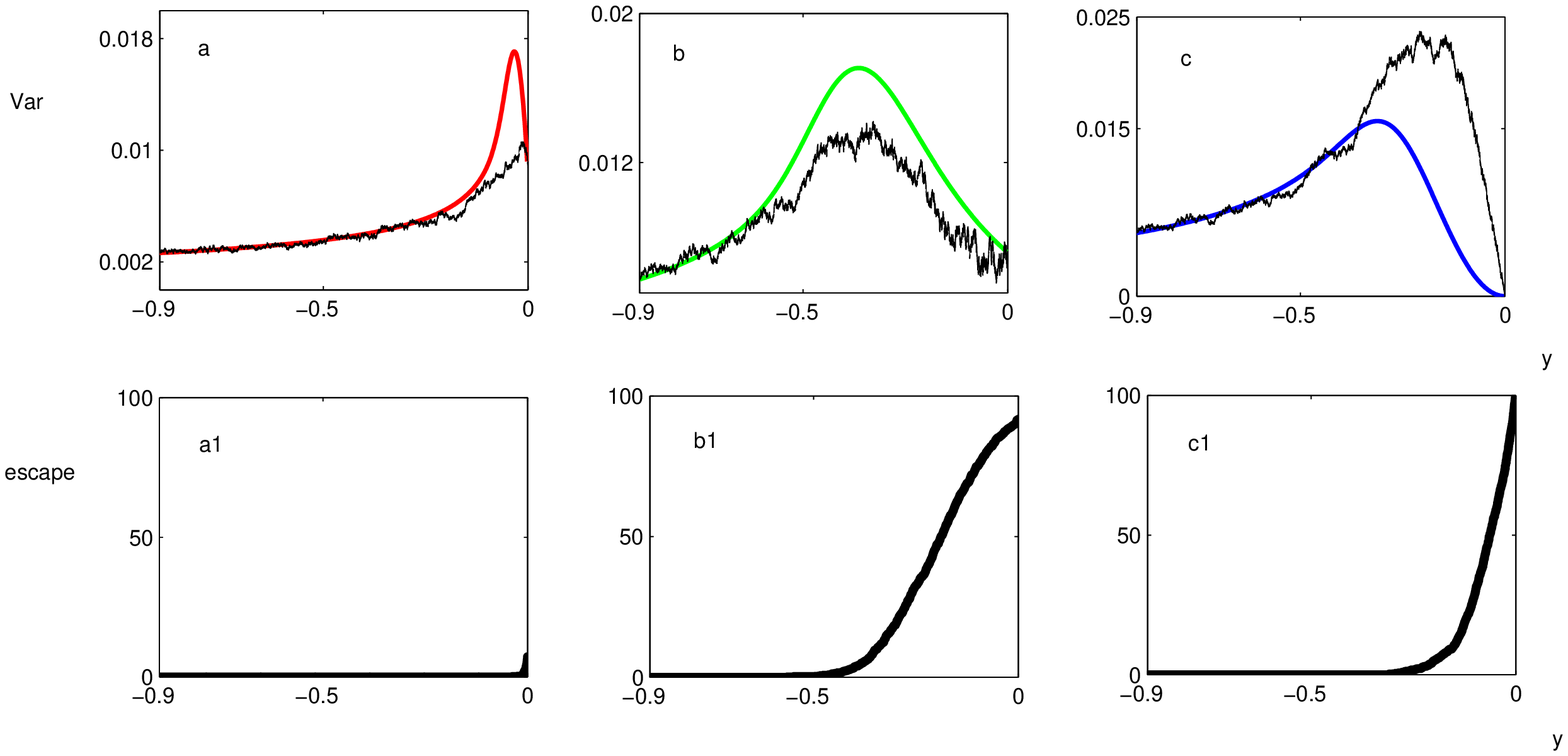}
	\caption{\label{fig:fig9}Variances $Var$ depending on the parameter $y$; transcritical (green), pitchfork (blue) and fold (red) transitions are taken from Figure \ref{fig:fig2} with $\sigma=0.1$. The black curves have been computed from 1000 sample paths with $(\sigma,\epsilon)=(0.1,0.02)$. A path beyond the unstable critical manifold at some $y=y_c$ (see boundaries in equation \eqref{eq:reflec_boundaries}) is counted as an escaped path and is not considered for the variance with $y>y_c$; note that the colored curves from Figure \ref{fig:fig2} have been computed with reflecting boundaries and $\epsilon=0$. The figures (a1),(b1),(c1) show the variance and (a2),(b2),(c2) the percentage of escaped trajectories for the fold, transcritical and pitchfork transitions respectively.}	
\end{figure} 

Again we consider the fast-slow SDE \eqref{eq:fold_simple_calc} for the fold, transcritical and pitchfork normal forms given in \ref{eq:reflec_boundaries}. Figure \ref{fig:fig9} shows the variance of the $x$-variable, for each value of $y$, calculated from 1000 sample paths. More precisely, if we index the sample paths by $j=1,2,\ldots,1000$ we compute the variance of the fast variable $\{x^j_t\}_j$ for a fixed time $t$; since $y=\epsilon t$, we expect to re-compute an approximation to the variance for the stationary distributions $p^y_s(x)$ if $\epsilon$ is sufficiently small, as long as we are not too close to $y=0$ where noise-induced transitions and reflecting boundary effects are dominant. We have fixed the parameter values to $(\sigma,\epsilon)=(0.1,0.02)$ which means for the fold bifurcation we rarely expect noise-induced transitions. Due to the different scaling laws for the transcritical and pitchfork bifurcations, we do expect noise-induced transitions in this case; cf. \cite{BerglundGentz} and equation \eqref{eq:BG3}. The percentage of escaped trajectories is shown in Figure \ref{fig:fig9}(a2),(b2),(c2). The computed variance of the sample paths is shown in Figure \ref{fig:fig9}(a1),(b1),(c1) as black curves.\\

\begin{figure}[htbp]
\psfrag{a}{(a)}
\psfrag{b}{(b)}
\psfrag{c}{(c)}
\psfrag{Var}{$Var$}
\psfrag{y}{$y$}
	\centering
		\includegraphics[width=0.9\textwidth]{./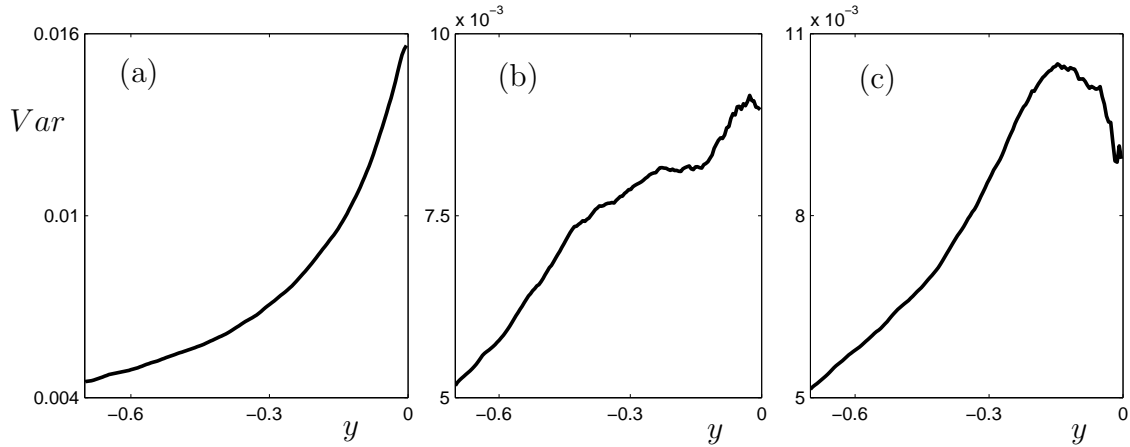}
	\caption{\label{fig:fig10} Sample variances $Var$ from \eqref{eq:var_Scheffer} depending on the parameter $y$; (a) fold bifurcation, (b) transcritical bifurcation and (c) pitchfork bifurcation. The black curves have been computed from 1000 sample paths with $(\sigma,\epsilon)=(0.1,0.02)$. A path beyond the unstable critical manifold at some $y=y_c$ (see boundaries in equation \eqref{eq:reflec_boundaries}) is counted as an escaped path and not considered for the variance with $y>y_c$.}	
\end{figure} 

Note that our initial prediction of variance increase from Section \ref{sec:moments} is correct but our simple stationary distribution method fails to capture the results correctly very close to the transition point. This is expected as sample paths are counted as escaped path for the numerical simulation once they reach the boundaries defined in \eqref{eq:reflec_boundaries} (``absorbing boundaries'', $\epsilon>0$) whereas the Fokker-Planck calculation in Section \ref{sec:moments} assumed reflecting boundary conditions and $\epsilon=0$. This shows that due to the reflecting boundaries the variance is decreased near the transition point. The interesting conclusion from Figure \ref{fig:fig10} is that different modeling techniques can produce different estimates for the moments in critical transition normal forms near the transition point. As long as we are far enough away in our approach the theories match up. This suggests to focus on this initial regime away from the bifurcation; this analysis is carried out in detail for all bifurcations up to codimension two in \cite{KuehnCT2}.\\

\begin{figure}[htbp]
\psfrag{x}{$x$}
\psfrag{y}{$y$}
	\centering
		\includegraphics[width=0.85\textwidth]{./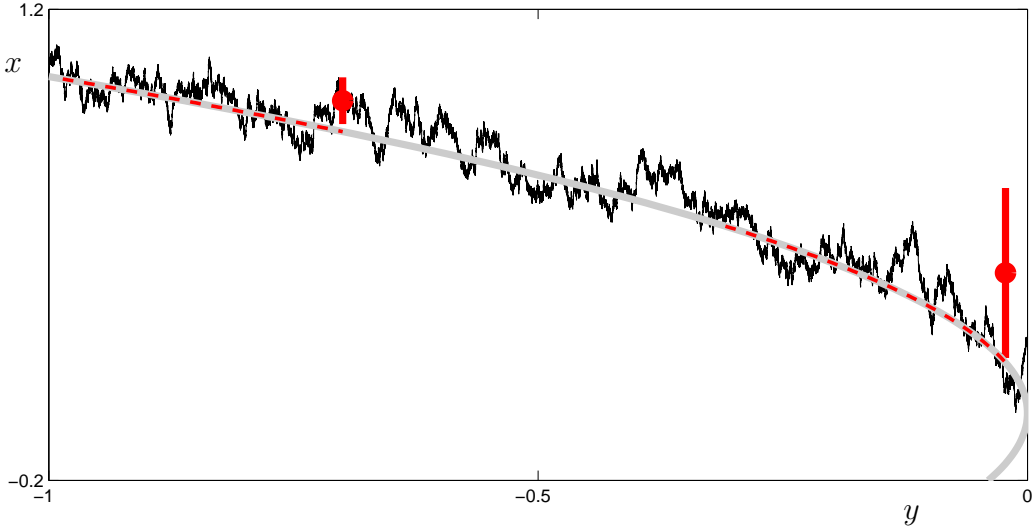}
	\caption{\label{fig:fig11} Sample path near a fold critical transition (black); parameters are $(\sigma,\epsilon)=(0.1,0.02)$. The deterministic critical manifold $C$ is shown in grey and two subsets are marked (dashed red) which correspond to windows of length $y\approx 0.2861$. From these two windows we compute two sample variances $V(t^*_{1,2})=V_{1,2}$ where $\epsilon t_1\approx -0.7$ and $\epsilon t^*_2\approx -0.02$ according to \eqref{eq:var_Scheffer}. The mean values $\mu(t^*_{1,2})=\mu_{1,2}$ are marked with red dots. The variance is indicated by a red vertical lines $[\mu_{j}-V_j,\mu_{j}+V_j]$ for $j=1,2$ that have been centered at the mean values and stretched by a factor of $20$ to make the variances visible.}	
\end{figure} 

However, a major problem arises in a practical context, if we only have a single sample path to predict a critical transition, say $\gamma_t=(x_t,y_t)$ for $t\in[0,T]$. Usually one computes an early-warning sign by considering a finite time interval (or window) of length $s<T$ and computes the sample path variance for this time interval \cite{Schefferetal}. Suppose $\gamma_t$ is known on a grid of times $t_j$ with $t_0=0$ and $t_{N-1}=T$ so that $N^*$ time points fall into an interval of time length $s$. Then the sample mean for the fast variable $x$ for some $t^*\in[s,T]$ is
\benn
\mu(t^*):=\mu([t^*-s,t^*])=\frac{1}{N^*}\sum_{t_j\in[t^*-s,t^*]} x_{t_j}
\eenn   
and the sample variance is
\be
\label{eq:var_Scheffer}
V(t^*):=Var([t^*-s,t^*])=\frac{1}{N^*}\sum_{t_j\in[t^*-s,t^*]} \left\{x_{t_j}-\mu([t^*-s,t^*])\right\}^2.
\ee   
Figure \ref{fig:fig10} shows the sample variance for $(\sigma,\epsilon)=(0.1,0.02)$. A window of size $s\approx 14.3051$ is used which corresponds an interval of length $\approx 0.2861$ for $y$ as $y=\epsilon t=0.02 t$. For the transcritical and pitchfork bifurcations in Figure \ref{fig:fig10}(b)-(c) we obtain shifted versions of the stationary variances i.e. the variance increases but local maxima are moved towards the critical transition. This is expected since the sample variance ``lags behind'' the stationary estimator that is computed at a fixed $y$ for $0\leq\epsilon\ll1$.\\

The sample variance indicator for the fold transition in Figure \ref{fig:fig10}(a) shows a clear monotone increasing deterministic trend and does not seem to lag behind the stationary variance calculation/simulation. This can be explained easily from the fast-slow geometry of the SDE as follows. Consider a single sample path near the fold transition shown in Figure \ref{fig:fig11} at parameter values $(\sigma,\epsilon)=(0.1,0.02)$. In Figure \ref{fig:fig11} two subsets of the deterministic critical manifold are marked (dashed red) which correspond to windows of length $y\approx 0.2861$. From these two windows we compute two sample variances $V(t^*_{1,2})=V_{1,2}$ where $\epsilon t_1\approx -0.7$ and $\epsilon t^*_2\approx -0.02$ according to \eqref{eq:var_Scheffer}. The mean values $\mu(t^*_{1,2})=\mu_{1,2}$ are marked with red dots. The variance is indicated by a red vertical lines $[\mu_{j}-V_j,\mu_{j}+V_j]$ for $j=1,2$ that have been centered at the mean values and stretched by a factor of $20$ to make the variances easier to visualize. It is now obvious why the variance must increase ``deterministically'' near fold critical transition if measured using \eqref{eq:var_Scheffer}; the critical manifold is locally parabolic and has much higher curvature near $y=0$. Since the window size for the measurement has to be rather large to measure anything meaningful, the sample mean $\mu_2$ is located further away from the critical manifold. Hence the sample variance will be larger due to geometric considerations and without even considering the noise effect. A good way to think about the situation is to project the subsets of the sample path corresponding to the two measurement windows onto the vertical red lines in Figure \ref{fig:fig11}. The same argument does not hold for the transcritical and pitchfork bifurcations as the stable critical manifold before the transition is given by $x=0$. This shows that practical measurement techniques have to be applied and interpreted very carefully if only a single sample path is available. 

\section{Autocorrelation}
\label{sec:autocorr}

\begin{figure}[tbp]
\psfrag{y}{$y$}
\psfrag{R}{$R$}
	\centering
		\includegraphics[width=0.8\textwidth]{./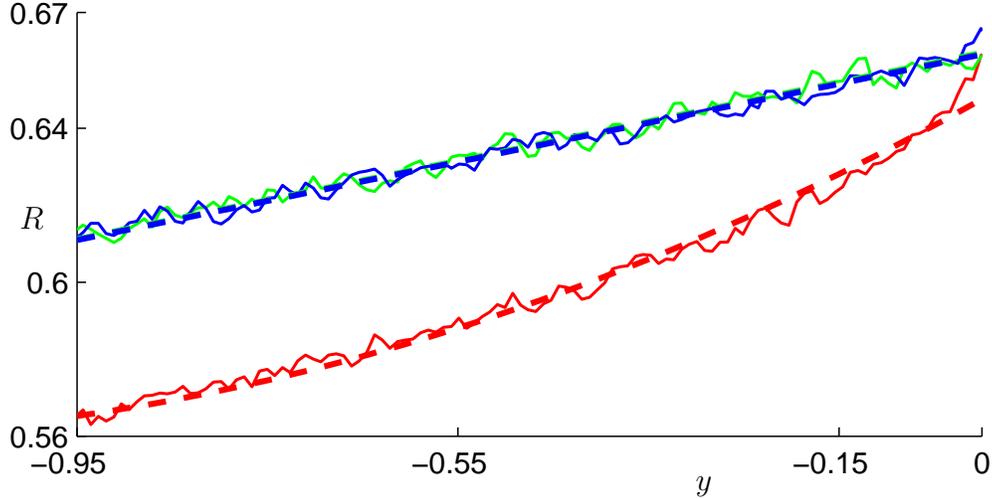}
	\caption{\label{fig:fig13}Plot of the lag-$k$ autocorrelation with $k=0.002$ for $10000$ sample paths for each of the three one-dimensional critical transitions: fold (red), transcritical (green) and pitchfork (blue). The solid thin lines are numerical data and the thick dashed lines are approximations (quadratic for the fold and linear for transcritical/pitchfork). Parameter values for the simulation are $\sigma=0.1$ and $\epsilon=0.02$.}	
\end{figure} 

\begin{figure}[htbp]
\psfrag{y}{$y$}
\psfrag{R}{$R$}
	\centering
		\includegraphics[width=0.8\textwidth]{./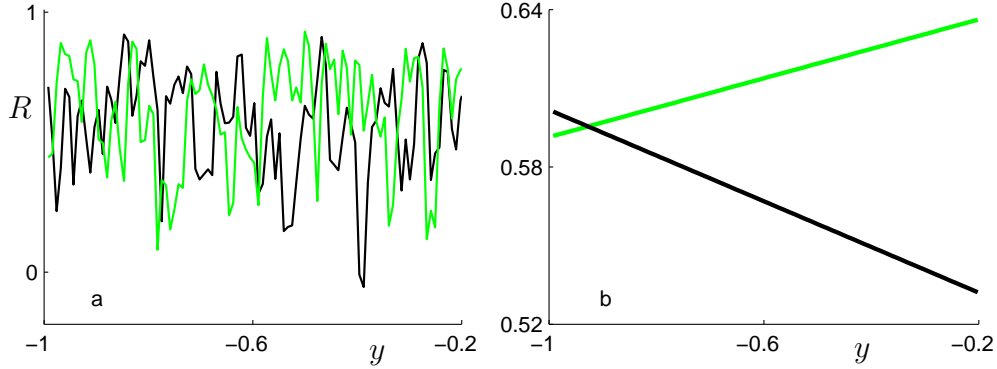}
	\caption{\label{fig:fig14}Plot of the lag-$k$ autocorrelation with $k=0.002$ for $2$ sample paths for the transcritical transition. (a) The thin lines are numerical data. (b) The thick lines are linear approximations of the curves in (a). Parameter values for the simulation are $\sigma=0.1$ and $\epsilon=0.02$.}	
\end{figure} 

Increasing autocorrelation has been proposed as an early warning sign for a critical transition \cite{HeldKleinen,Dakosetal,Schefferetal}. As a first step we calculate the autocorrelation from numerical simulation averaged over $10000$ sample paths for the normals forms of the fold, transcritical and pitchfork transitions; see equations \eqref{eq:fold_simple_calc} and \eqref{eq:reflec_boundaries}. The lag-$k$ autocorrelation can be estimated from a time series $(x_1,x_2,\ldots,x_n)$ by the formula
\benn
R(k):=\frac{1}{(n-k)v^2}\sum_{l=1}^{n-k}  (x_l-\mu)(x_{l+k}-\mu)
\eenn
where $\mu$ and $v$ are the sample mean and variance. We counted a sample path as an escaped path once it leaves the set $\{(x,y)\in\R^2:x>-1\}$ for the fold and transcritical transitions; for the pitchfork transition we consider sample paths only inside the set $\{(x,y)\in\R^2:|x|<1\}$. Figure \ref{fig:fig13} shows the results for the lag-$k$ autocorrelation with a short lag of $k=0.002$ computed from a subsegment of the sample path of length $8k$. We do not discuss the different choices regarding the lag $k$ or the choice of time series subsegments but remark that practical applications might have to deal with short time series data. There is a visible increasing trend in the autocorrelation for all three critical transition point. The autocorrelation for the transcritical and pitchfork transitions increases linearly and the two cases are virtually indistinguishable by this measure. The fold autocorrelation seems to increase quadratically. This shows that the increase in autocorrelation can be found in our SDE normal forms as an indicator for a critical transition.\\

As for the variance, it is more problematic to interpret the autocorrelation as an indicator for a single sample path. The problem is demonstrated in Figure \ref{fig:fig14} for two sample path approaching the transcritical transition. The autocorrelation fluctuates rapidly as $y$ slowly increases; see Figure \ref{fig:fig14}(a). As a first approach to check whether it is increasing or decreasing we consider a linear approximation as in Figure \ref{fig:fig13}. These lines are shown in Figure \ref{fig:fig14}(b) and one increases (green) while the other decreases (black). We know that on average we expect an increasing autocorrelation but we would make an incorrect prediction from the black sample path. This demonstrates a need for a detailed analysis of the dependence of different indicators on the parameters. For example, for the autocorrelation we have the system parameters $(\epsilon,\sigma)$ and the measurement parameters $(k,n)$ for the lag-$k$ autocorrelation of a time series of length $n$.  
 
\section{Discussion}
\label{sec:discussion}

In this paper we have given an overview of the mathematical tools that can be applied to critical transitions. Our main viewpoint is that studying normal-form fast-slow stochastic dynamical systems should provide an additional route to understand critical transitions beyond studying models arising directly from applications. Standard methods from fast-slow systems have been used to formalize the definition of a critical transition. As the next step, different viewpoints from stochastic dynamics were reviewed and their contributions to the prediction of critical transitions was discussed. For example, we have pointed out that variance increase immediately follows from well-known results of sample paths analysis or that P-bifurcations could act as a novel prediction mechanism. Then we focused on the variance as an indicator in the setup of normal forms and used analytical, numerical and geometric ideas to understand the increasing variance near a critical transition. Throughout our analysis we highlighted several challenges that arise in the modeling process including noise types (additive/multiplicative), problems with single sample paths as well as scaling laws for noise-induced phenomena. \\

We have not discussed further mechanisms and early warning signs that have been reported in applications:
\begin{itemize}
 \item[(a)] The change in spatial structure of a dynamical system can often be used as an indicator for an upcoming transition \cite{DonangeloFortDakosSchefferNes,RietkerkDekkerRuiterKoppel}. One could hope that bifurcation theory for pattern formation is applicable in this case \cite{SandstedeScheelWulff2,Hoyle}; in particular, reaction-diffusion PDEs might be the best starting point. The stochastic theory for SPDEs is much less developed \cite{GarciaOjalvoSancho} but statistical indicators are still expected to exist. 
 \item[(b)] We have focused primarily on the one-dimensional critical transitions (fold, transcritical, pitchfork). Although the pitchfork transition immediately gives results for the Hopf transition if the noise is only in the radial component, it does not capture its complete dynamics. The analysis of stochastic Hopf bifurcation is much more complicated than one-dimensional stochastic bifurcations \cite{KellerOchs,ArnoldRPB,BerglundGentz2}. We expect that the general analysis can be particularly complicated by noise correlated between the two fast variables.
 \item[(c)] Global bifurcations can induce drastic shifts in dynamical systems \cite{HomburgSandstede,Wiggins1}. In this respect, it becomes evident that we should also address critical transitions for iterated maps since they appear as Poincar\'{e} maps for differential equations; for example, it is well-known that critical slowing down occurs near a period-doubling bifurcation \cite{Hao}.  
 \item[(d)] Chaotic systems might provide special indicators that could be examined \cite{Schefferetal}. The generation of many chaotic attractors is preceded by well-analyzed bifurcation sequences \cite{GH,AlligoodSauerYorke}. Therefore it is conceivable that one might be able to modify or extend existing methods to yield early warning signs.
 \item[(e)] Fast-slow systems with three or more dimensions have not been discussed here. One example are fold bifurcations with two slow variables and one fast variable \cite{Guckenheimer7,Wechselberger} which occur generically on one-dimensional curves. Small oscillations can occur before a trajectory reaches a fold bifurcation and jumps to a far-away attractor. This behavior could be used as an indicator to predict a critical transition; a detailed review of the deterministic case in the context of mixed-mode oscillation can be found in \cite{KuehnMMO}. Stochastic folded nodes are discussed in \cite{BerglundGentzKuehn}. 
 \item[(f)] We have also not discussed the effect of noise on delay. The main point in this context is that small noise can reduce the deterministic delay effect discussed in Section \ref{sec:det2}; we refer the reader to \cite{Kuske,BerglundGentz6} and references therein for a more detailed discussion. However, let us note that it be very desirable to find early-warning signs before the delay-region i.e. calculating the precise jump time should be the second step of the mathematical analysis. 
\end{itemize}

We hope that the framework we reviewed and augmented in this paper also provides a better bridge between critical transitions in applications and the associated open mathematical challenges. It is expected that some new mathematical methods are going to be needed to address (a)-(f). Furthermore, we are fully aware that we have not maximized the results one can obtain from techniques presented here. For further results on normal forms, scaling of the variance and several applications see \cite{KuehnCT2}.\\ 

\textbf{Acknowledgements:} The author would like to thank John Guckenheimer for very valuable contributions to this paper. In particular, his ideas helped to significantly improve our definition of a critical transition point for deterministic fast-slow systems. Furthermore he provided many helpful suggestions for the deterministic part of the paper. The author also thanks the two anonymous referees for their helpful suggestions.

\end{document}